\providecommand{\R}{}
\providecommand{\N}{}
\renewcommand{\R}{\mathbb{R}}
\renewcommand{\N}{{\mathbb N}}
\newcommand{\I}[1]{{\mathbf 1}_{\left\{#1\right\}}}									
\newcommand{\set}[1]{\left\{ #1 \right\}}								
\newcommand{\esub}[1]{{\mathbf E_{#1}}}
\newcommand\cF{\mathcal F}
\providecommand{\ora}[1]{}
\renewcommand{\ora}[1]{\overrightarrow{#1}}
\newcommand{\threepartdef}[6]
{
	\left\{
		\begin{array}{ll}
			#1 & \mbox{if } #2 \\
			#3 & \mbox{if } #4 \\ 
			#5 & \mbox{if } #6
		\end{array}
	\right.
}
\newcommand{\twopartdef}[4]
{
	\left\{
		\begin{array}{ll}
			#1 & \mbox{if } #2 \\
			#3 & \mbox{if } #4
		\end{array}
	\right.
}
\newtheorem{thm}{Theorem}
\newtheorem{lem}[thm]{Lemma}
\newtheorem{prop}[thm]{Proposition}
\newtheorem{cor}[thm]{Corollary}
\newtheorem{rem}{Remark}
\numberwithin{thm}{section}
\numberwithin{equation}{section}
\begin{document}

\title[De Finetti's control problem]{De Finetti's control problem with a concave bound on the control rate}

\author[Locas]{F\'elix Locas}
\address{D\'epartement de math\'ematiques, Universit\'e du Qu\'ebec \`a Montr\'eal (UQAM), 201 av.\ Pr\'esident-Kennedy, Montr\'eal (Qu\'ebec) H2X 3Y7, Canada}
\email{locas.felix@uqam.ca, renaud.jf@uqam.ca}

\author[Renaud]{Jean-Fran\c cois Renaud}


\date{\today}

\keywords{Stochastic control, absolutely continuous strategies, dividend payments, diffusion model, Brownian motion, nonlinear Ornstein-Uhlenbeck process.}


\begin{abstract}
We consider De Finetti's control problem for absolutely continuous strategies with control rates bounded by a concave function and prove that a generalized mean-reverting strategy is optimal. In order to solve this problem, we need to deal with a nonlinear Ornstein-Uhlenbeck process. Despite the level of generality of the bound imposed on the rate, an explicit expression for the value function is obtained up to the evaluation of two functions.This optimal control problem has those with control rates bounded by a constant and a linear function, respectively, as special cases.
\end{abstract}

\maketitle

\section{Introduction}

Nowadays, \textit{De Finetti's control problem} refers to a family of stochastic optimal control problems concerned with the maximization of withdrawals made from a stochastic system. While it has interpretations in models of population dynamics and natural resources and in inventory models,  it originates from the field of insurance mathematics. Usually, the performance function is the expected time-discounted value of all withdrawals made up to a first-passage stopping time. In the original insurance context, it is interpretated as follows: find the optimal way to pay out dividends, taken from the insurance surplus process, until ruin is declared. In this case, the performance function is the expectation of the total amount of discounted dividend payments made up to the time of ruin. Therefore,  the difficulty consists in finding the optimal balance between paying out dividends as much (and as early) as possible while avoiding ruin to maintain those payments on the long run. Similar interpretations can be made in population dynamics (harvesting) and for natural resources extractions; see, e.g., \cites{alvarez-shepp_1998, ekstrom-lindensjo_2021}. In any case, the overall objective is the identification of the optimal way to withdraw from the system (optimal strategy) and the derivation of an analytical expression for the optimal value function. 

In this paper, we consider absolutely continuous strategies. Except for very recent contributions (see \cites{albrecher-azcue-muler_2022,  angoshtari-bayraktar-young_2019, renaud-simard_2021}), most of the literature has considered constant bounds on the control rates (see, e.g., \cites{asmussen-taksar_1997,jeanblanc-shiryaev_1995, choulli-et-al_2004}). Following the direction originally taken by \cite{alvarez-shepp_1998} and later on by \cite{renaud-simard_2021}, we apply a much more general bound, namely a concave function applied to the current level of the system.


\subsection{Model and problem formulation}

Let $(\Omega, \cF, (\cF_t)_{t \geq 0}, \mathbf{P})$ be a filtered probability space. Fix $\mu, \sigma > 0$. The state process $X = \set{X_t, t \geq 0}$ is given by
\begin{equation}\label{noncontrol}
X_t = x + \mu t + \sigma W_t,
\end{equation}
where $x \in \R$ and where $W = \set{W_t, t \geq 0}$ is an $(\cF_t)$-adapted standard Brownian motion.  For example, the process $X$ can be interpreted as the (uncontrolled) density of a population or the (uncontrolled) surplus process of an insurance company. As alluded to above, we are interested in absolutely continuous control processes. More precisely, a control strategy $\pi$ is characterized by a nonnegative and adapted control rate process $l^\pi = \set{l^\pi_t, t \geq 0}$ yielding the cumulative control process
\begin{equation*}
L_t^\pi = \int_0^t l_s^\pi \mathrm{d}s .
\end{equation*}
Note that $L^\pi = \set{L^\pi_t, t \geq 0}$ is nondecreasing and such that $L^\pi_0=0$. The corresponding controlled process $U^\pi = \set{U^\pi_t, t \geq 0}$ is then given by
\begin{equation*}
U_t^\pi = X_t - L_t^\pi.
\end{equation*}

In the above mentioned applications (e.g., harvesting and dividend payments), it makes sense to allow for higher rates when the underlying state process is far from its critical level and to allow for higher (relative) increase in this rate as early as possible. The situation is reminiscent of utility functions in economics. In this direction, let us fix an increasing and concave function $F \colon \R \to \R$ such that $F(0) \geq 0$. For technical reasons, we assume $F$ is a differentiable Lipschitz function. Finally, a strategy $\pi$ is said to be admissible if its control rate is also such that
\begin{equation}\label{eq:concave-bound}
0 \leq l_t^\pi \leq F(U_t^\pi),
\end{equation}
for all $0 \leq t \leq \tau_0^\pi$, where $\tau_0^\pi = \inf\set{t > 0 \colon U_t^\pi < 0}$ is the termination time.  The termination level $0$ is chosen only for simplicity. Let $\Pi^F$ be the set of all admissible strategies. 

From now, we will write $\mathbf{P}_x$ for the probability measure associated with the starting point $x$ and $\mathbf{E}_x$ for the expectation with respect to $\mathbf{P}_x$. When $x=0$, we write $\mathbf{P}$ and $\mathbf{E}$. 

Fix a time-preference parameter $q > 0$ and then define the value of a strategy $\pi \in \Pi^F$ by
\begin{equation*}
V_\pi(x) = \mathbf{E}_x\left[\int_0^{\tau_0^\pi} e^{-qt} l_t^\pi \mathrm{d}t \right], \quad x \geq 0.
\end{equation*}
Note that in our model, we have $V_\pi(0)=0$, for all $\pi \in \Pi^F$.

We want to find an optimal strategy, that is a strategy $\pi^\ast \in \Pi^F$ such that, for all $x \geq 0$ and for all $\pi \in \Pi^F$, we have $V_{\pi^*}(x) \geq V_\pi(x)$. We also want to compute the optimal value function given by
\begin{equation*}
V(x) = \sup_{\pi \in \Pi^F} V_\pi(x), \quad x \geq 0 .
\end{equation*}

\begin{rem}\label{remark}
The optimal value function and an optimal strategy (if it exists) should be independent of the behaviour of $F$ when $x < 0$. The Lipschitz condition assumption is a sufficient condition for the existence and the uniqueness of a (strong) solution to the stochastic differential equation (SDE) defined in~\eqref{allin}; see, e.g., \cite{oksendal_2003}. In fact, any increasing differentiable concave function $F : (0, +\infty) \to \R$ such that $F(0) \geq 0$ and $F'(0+) < \infty$ satisfy the conditions for our model: we can extend $F$ as a Lipschitz continuous function by extending $F$, such that $F(x) = F'(0+)x + F(0)$, for all $x \leq 0$.
\end{rem}

\subsection{More related literature}

De Finetti's optimal control problems are adaptations and interpretations of Bruno de Finetti's seminal work \cite{definetti_1957}. As they have been extensively studied in a variety of models, especially over the last 25 years, it is nearly impossible to provide a proper literature review on this topic in an introduction. See the review paper \cite{albrecher-thonhauser_2009} for more details. Therefore, let us focus on the literature closer to our model and our problem.

In \cites{jeanblanc-shiryaev_1995, asmussen-taksar_1997}, the problem for absolutely continuous strategies in a Brownian model is tackled under the following assumption on the control rate:
\begin{equation}\label{eq:constant-bound}
0 \leq l_t^\pi \leq R ,
\end{equation}
where $R>0$ is a given constant; see also \cite{kyprianou-et-al_2012} for the same problem in a Lévy model. For this problem, a threshold strategy is optimal: above the optimal barrier level, the maximal control rate $R$ is applied, otherwise the system is left uncontrolled. In other words, a threshold strategy is characterized by a barrier level and the corresponding controlled process is a Brownian motion with a two-valued drift.  Very recently, in \cite{renaud-simard_2021}, the admissibility condition in~\eqref{eq:constant-bound} was replaced by the following one:
\begin{equation}\label{eq:linear-bound}
0 \leq l_t^\pi \leq K U^\pi_t ,
\end{equation}
where $K>0$ is a given constant. In other words, the control rate is now bounded by a linear transformation of the current state. Note that this idea had already been considered in a biological context (but under a different model) in \cite{alvarez-shepp_1998}. In \cite{renaud-simard_2021}, it is proved that a mean-reverting strategy is optimal: above the optimal barrier level, the maximal control rate $KU^\pi_t$ is applied, otherwise the system is left uncontrolled. Again, such a control strategy is characterized by a barrier level, but now the controlled process is a \textit{refracted} diffusion process, switching between a Brownian motion with drift and an Ornstein-Uhlenbeck process.

To the best of our knowledge, the concept of \textit{mean-reverting strategy} first appeared in \cite{avanzi-wong_2012}. It was argued that, in an insurance context, this type of strategies has desirable properties for shareholders. Then, in \cite{renaud-simard_2021}, the name \textit{mean-reverting strategies} was used for a larger family of control strategies; in fact, the mean-reverting strategy in \cite{avanzi-wong_2012} is one member (when the barrier level is equal to zero) of this sub-family of strategies. As mentioned above, a mean-reverting strategy is optimal in the case $F(x)=Kx$. Following those lines, we will use the name \textit{generalized mean-reverting strategy} for similar bang-bang strategies corresponding to the general case of an increasing and concave function $F$.

\subsection{Main results and outline of the paper}

Obviously, the bounds imposed on the control rates in these last two problems, as given in~\eqref{eq:constant-bound} for the constant case and in~\eqref{eq:linear-bound} for the linear case, are special cases of the one considered in our problem and presented in~\eqref{eq:concave-bound}. A solution to our general problem is given in Theorem~\ref{mainresult}. As we will see, an optimal control strategy is provided by a generalized mean-reverting strategy, which is also characterized by a barrier level $b$. More specifically, for such a strategy, the optimal controlled process is given by the following diffusion process:
\[
\mathrm{d}U_t^b = \left(\mu - F(U_t^b)\I{U_t^b > b}\right)\mathrm{d}t + \sigma \mathrm{d}W_t .
\]

We use a guess-and-verify approach: first, we compute the value of any generalized mean-reverting strategy (Proposition~\ref{valuefunctionprop}) using a Markovian decomposition and a perturbation approach; second, we find the optimal barrier level (Proposition~\ref{propsolution}); finally, using a Verification Lemma (Lemma~\ref{verificationlemma}), we prove that this \textit{best} mean-reverting strategy is in fact an optimal strategy for our problem. Despite the level of generality (of $F$),  our solution for this control problem is explicit up to the evaluation of two \textit{special functions} strongly depending on $F$: $\varphi_F$, the solution to an ordinary differential equation, and $I_F$, an expectation functional. While $\varphi_F$ is one of the two well-known fundamental solutions associated to a diffusion process (also the solution to a first-passage problem), the definition of $I_F$ (see Equation~\eqref{IF}) is an intricate expectation for which many properties can be obtained (see Proposition~\ref{associate}).

The rest of the paper is organized as follows. In Section 2, we provide important preliminary results on the functions at the core of our main result and we state a Verification Lemma for the maximization problem. In Section 3, we introduce the family of generalized mean-reverting strategies and compute their value functions. In Section 4, we state and prove the main result, which is a solution to the control problem, before looking at specific examples. A proof of the Verification Lemma is provided in the Appendix.

\section{Preliminary results}



First, let $G \colon [0, \infty) \to \R$ be an increasing and differentiable function. Consider the homogeneous ordinary differential equation (ODE)
\begin{equation}\label{ODEF}
\Gamma_G(f) := \frac{\sigma^2}{2} f''(x) + (\mu - G(x))f'(x) - qf(x) = 0 ,  \quad x > 0,
\end{equation}
and denote by $\psi_G$ (resp.\ $\varphi_G$) a positive increasing (resp.\ decreasing) solution to~\eqref{ODEF}, with $\psi_G(0) = \varphi_G(+\infty) = 0$. Under the additional conditions that $\varphi_G(0) = 1$ and $\psi_G'(0) = 1$, it is known that $\psi_G$ and $\varphi_G$ are uniquely determined. It is also known that $\psi_G,\varphi_G \in C^3[0, \infty)$.

We will write $\Gamma$, $\psi$ and $\varphi$ when $G \equiv 0$.  In particular, it is easy to verify that, for $x \geq 0$,
\begin{equation}\label{scale}
\psi(x) = \frac{\sigma^2}{\sqrt{\mu^2 + 2q \sigma^2}} \mathrm e^{-(\mu/\sigma^2)x} \sinh \left((x/\sigma^2) \sqrt{\mu^2 + 2q \sigma^2}\right).
\end{equation}

In what follows, we will not manipulate this explicit expression for $\psi$. Instead, we will use its analytical properties (see Lemma~\ref{AnalyticLemma} below).

\begin{rem}\label{remscale}
The expression for $\psi$ is proportional to the expression for $W^{(q)}$, the $q$-scale function of the Brownian motion with drift $X$, used in \cite{renaud-simard_2021}. For more details,  see \cite{kyprianou_2014}.
\end{rem}

The next lemma gives analytical properties of the functions $\psi$ and $\varphi_G$. See \cite{ekstrom-lindensjo_2021} for a complete proof. 
\begin{lem}\label{AnalyticLemma}
Let $G \colon [0, \infty) \to \R$ be an increasing and differentiable function. The functions $\psi$ and $\varphi_G$ have the following analytical properties:
\begin{itemize}
\item[(a)] $\psi$ is strictly increasing and strictly concave-convex with a unique inflection point $\hat{b} \in (0, \infty)$;
\item[(b)] $\varphi_G$ is strictly decreasing and strictly convex.
\end{itemize}
\end{lem}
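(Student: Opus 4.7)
The plan is to prove both parts by elementary ODE analysis based directly on \eqref{ODEF}, ruling out unwanted critical and inflection points via sign arguments and then pinning down the global shape using the boundary data plus asymptotics extracted from the ODE itself. The same blueprint works for both $\psi$ and $\varphi_G$.

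For part (a), I would first show $\psi' > 0$ throughout. Since $\psi(0) = 0$ and $\psi'(0) = 1$, if $x_0$ were the smallest positive zero of $\psi'$, then $\psi > 0$ on $(0, x_0]$, so \eqref{ODEF} (with $G \equiv 0$) gives $\psi''(x_0) = (2q/\sigma^2)\,\psi(x_0) > 0$, contradicting $\psi''(x_0) \le 0$ (which must hold since $\psi'$ drops from positive to zero at $x_0$). Next, \eqref{ODEF} at $x=0$ yields $\psi''(0) = -2\mu/\sigma^2 < 0$, so $\psi$ is concave near $0$. Differentiating \eqref{ODEF} gives
\[
\tfrac{\sigma^2}{2}\,\psi''' + \mu\,\psi'' - q\,\psi' = 0,
\]
and evaluating at any zero $x_0$ of $\psi''$ yields $\psi'''(x_0) = (2q/\sigma^2)\,\psi'(x_0) > 0$. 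Hence $\psi''$ may only cross zero upward, which forces at most one such crossing; combined with $\psi''(0) < 0$, once the crossing occurs one has $\psi'' > 0$ thereafter. To confirm a crossing does occur, I would argue by contradiction: if $\psi'' \le 0$ throughout, then $\psi'$ is nonincreasing and bounded in $[L, 1]$ for some $L \ge 0$; feeding the resulting asymptotic behaviour of $\psi$ and $\psi'$ back into \eqref{ODEF} forces $\psi''$ to be eventually strictly positive, a contradiction. Thus a unique inflection point $\hat b \in (0, \infty)$ exists.

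For part (b), I would proceed analogously. First, to show $\varphi_G' < 0$ everywhere, suppose $\varphi_G'(x_0) = 0$ for some $x_0 \ge 0$; \eqref{ODEF} gives $\varphi_G''(x_0) = (2q/\sigma^2)\,\varphi_G(x_0) > 0$, so $\varphi_G$ has a strict local minimum at $x_0$. Combined with $\varphi_G(0) = 1$ and $\varphi_G(+\infty) = 0$, this forces $\varphi_G$ to attain an interior local maximum at some $x_1 > 0$, where the very same identity yields $\varphi_G''(x_1) > 0$, contradicting the local maximum condition. For strict convexity, differentiating \eqref{ODEF} gives
\[
\tfrac{\sigma^2}{2}\,\varphi_G''' + (\mu - G)\,\varphi_G'' - (G' + q)\,\varphi_G' = 0,
\]
so at any zero $x_0$ of $\varphi_G''$ one obtains $\varphi_G'''(x_0) = (2/\sigma^2)(G'(x_0) + q)\,\varphi_G'(x_0) < 0$, since $G' \ge 0$ and $\varphi_G' < 0$. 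Hence $\varphi_G''$ can only cross zero downward, so it has at most one zero; if one did exist, then $\varphi_G''$ would be strictly negative beyond that point, making $\varphi_G'$ strictly decreasing and hence bounded above by a strictly negative constant thereafter, forcing $\varphi_G(x) \to -\infty$ and contradicting $\varphi_G > 0$. Therefore $\varphi_G'' > 0$ everywhere.

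The main obstacle is the asymptotic step in part (a): ruling out the ``never inflects'' scenario without relying on the explicit formula \eqref{scale} requires carefully extracting the growth of $\psi$ and $\psi'$ from \eqref{ODEF} alone, splitting on whether $\lim \psi'$ is positive or zero and deducing in each case that \eqref{ODEF} eventually forces $\psi'' > 0$. Once this is in place, the concave-convex shape with a unique inflection point drops out cleanly from the sign of $\psi'''$ at zeros of $\psi''$, and part (b) follows the same blueprint with only the sign of $G' + q$ entering as an extra ingredient.
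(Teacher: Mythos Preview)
Your proposal is correct and self-contained; the paper itself does not give a proof of this lemma at all but simply refers the reader to \cite{ekstrom-lindensjo_2021} for a complete proof. So there is no ``paper's own proof'' to compare against here.

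Your ODE-based approach---ruling out critical points via the sign of the second derivative forced by \eqref{ODEF}, and ruling out spurious inflection points via the sign of the third derivative obtained by differentiating \eqref{ODEF}---is the standard route and is presumably what the cited reference does as well. The arguments go through as you describe: in part (a) the case split on $L:=\lim_{x\to\infty}\psi'(x)$ is handled cleanly by observing that if $L>0$ then $\psi(x)\ge Lx$ forces $\psi''\to+\infty$ via \eqref{ODEF}, while if $L=0$ then $q\psi(x)\ge q\psi(1)>0$ eventually dominates $\mu\psi'(x)\to 0$, again forcing $\psi''>0$; in part (b) your downward-crossing argument for $\varphi_G''$ correctly uses $G'\ge 0$, and the residual case $\varphi_G''<0$ throughout (no zero at all) is dispatched by the same ``$\varphi_G'$ bounded above by a negative constant, hence $\varphi_G\to-\infty$'' contradiction. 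One small point worth making explicit in part (b): before invoking the local-max contradiction, note that if $\varphi_G'$ never vanishes then it has constant sign by continuity, and the sign must be negative because $\varphi_G(0)=1>\varphi_G(+\infty)=0$; this closes the case analysis.
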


The value of the inflection point of $\psi$ is known explicitly; see, e.g., Equation (4) in \cite{renaud-simard_2021}. Our analysis does not depend on this specific value.



Recall from~\eqref{noncontrol} that $X_t = x + \mu t + \sigma W_t$. Define $U = \left\lbrace U_t , t \geq 0 \right\rbrace$ by
\begin{equation}\label{allin}
\mathrm{d}U_t = (\mu - F(U_t)) \mathrm{d}t + \sigma \mathrm{d}W_t. 
\end{equation}
Under our assumptions on $F$, there exists a unique strong solution to this last stochastic differential equation. This is what we called a \textit{nonlinear Ornstein-Uhlenbeck process}.

Now, for $a \geq 0$, define the following first-passage stopping times:
\[
\tau_a = \inf\set{t > 0 \colon X_t = a} \quad \text{and} \quad \tau^F_a = \inf\set{t > 0 \colon U_t = a}.
\]

%
%

It is well known that, for $0 \leq x \leq b$, we have
\[
\esub{x}\left[\mathrm{e}^{-q \tau_b} \I{\tau_b < \tau_0} \right] = \frac{\psi(x)}{\psi(b)} ,
\]
and, for $x \geq 0$, we have
\[
\esub{x}\left[\mathrm{e}^{-q \tau^F_0} \I{\tau^F_0 < \infty}\right] = \varphi_F(x) ,
\]
since it is assumed $\varphi_F(0)=1$.


Finally, define
\begin{equation}\label{IF}
I_F(x) = \esub{x}\left[\int_0^\infty \mathrm e^{-qt} F(U_t) \mathrm{d}t \right] ,   \quad x \in \R.
\end{equation}
As alluded to above, the analysis of this functional is of paramount importance for the solution of our control problem. The main difficulty in computing this functional lies in the fact that the dynamics of $U$ also depend on $F$.


\begin{prop}\label{associate}
The function $I_F \colon \R \to \R$ is a twice continuously differentiable, increasing and concave solution to the following ODE:
\begin{equation}\label{nonhomogeneous}
\Gamma_F(f) = -F ,  \quad x > 0.
\end{equation}
Moreover, we have $0 \leq I_F'(x) \leq 1$, for all $x \geq 0$.
\end{prop}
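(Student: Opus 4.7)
The plan is to identify $I_F$ as the Feynman--Kac representation of the appropriate solution of the linear ODE~\eqref{nonhomogeneous}, and then to analyse the sign and magnitude of its first two derivatives by differentiating the stochastic flow $x\mapsto U_t^x$ associated to the SDE~\eqref{allin}.

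First, I would check that $I_F$ is well defined and locally bounded on $\R$: the Lipschitz character of $F$ and Gr\"onwall's lemma give $\esub{x}\left[|U_t|\right]\leq C_1(|x|+1)\mathrm e^{C_2 t}$, and the linear bound $|F(u)|\leq F(0)+L|u|$ then makes $\mathrm e^{-qt}F(U_t)$ uniformly integrable in $t\geq 0$ for $x$ ranging over any bounded set. I would next build a $C^2$ solution $g$ of $\Gamma_F(g)=-F$ on $(0,\infty)$ by variation of parameters against the fundamental pair $(\psi_F,\varphi_F)$, picking the particular solution whose growth at infinity ensures that, applied to $\mathrm e^{-qt}g(U_t)+\int_0^t \mathrm e^{-qs}F(U_s)\,\mathrm ds$, It\^o's formula produces a true martingale. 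Sending $t\to\infty$ and invoking dominated convergence identifies $g$ with $I_F$, which simultaneously delivers the ODE~\eqref{nonhomogeneous} and the $C^2$ regularity on $(0,\infty)$.

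For monotonicity I would apply a standard SDE comparison theorem to two copies of~\eqref{allin} driven by the same Brownian motion but started at $x_1\leq x_2$, obtaining $U_t^{x_1}\leq U_t^{x_2}$ almost surely; since $F$ is increasing this gives $I_F(x_1)\leq I_F(x_2)$. For the upper bound, the flow $x\mapsto U_t^x$ is differentiable with Jacobian $Y_t^x:=\partial U_t^x/\partial x$ solving the linearised equation $\mathrm d Y_t^x=-F'(U_t^x)Y_t^x\,\mathrm dt$, $Y_0^x=1$, so that
\[
Y_t^x=\exp\!\left(-\int_0^t F'(U_s^x)\,\mathrm ds\right)\in(0,1],
\]
where the upper bound uses $F'\geq 0$. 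Dominated convergence lets me differentiate under the expectation, and since $F'(U_t)Y_t=-\tfrac{\mathrm d}{\mathrm dt}Y_t$, an integration by parts in $t$ collapses the derivative to
\[
I_F'(x)=\esub{x}\left[\int_0^\infty \mathrm e^{-qt}F'(U_t)Y_t\,\mathrm dt\right]=1-q\,\esub{x}\left[\int_0^\infty \mathrm e^{-qt}Y_t\,\mathrm dt\right],
\]
and $0\leq Y_t\leq 1$ immediately yields $0\leq I_F'(x)\leq 1$.

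Concavity follows by showing that $x\mapsto I_F'(x)$ is nonincreasing. If $x_1\leq x_2$, comparison again gives $U_s^{x_1}\leq U_s^{x_2}$ pathwise, and since $F$ is concave, $F'$ is nonincreasing, hence $F'(U_s^{x_2})\leq F'(U_s^{x_1})$ for every $s\geq 0$. Consequently $Y_t^{x_2}\geq Y_t^{x_1}$ pathwise, so $x\mapsto \esub{x}\left[\int_0^\infty \mathrm e^{-qt}Y_t\,\mathrm dt\right]$ is nondecreasing, and the displayed identity forces $I_F'(x_2)\leq I_F'(x_1)$. The main obstacle I foresee is technical rather than conceptual: the differentiability of the SDE flow and the explicit formula for $Y_t^x$ require some care under the stated hypothesis that $F$ is merely differentiable and Lipschitz, since $F'$ need not be continuous. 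A safe remedy is to establish the proposition first for a smooth concave Lipschitz approximation $F_n\to F$ and then to pass to the limit using the stability of~\eqref{allin} and the explicit representations of $I_{F_n}$ and $I_{F_n}'$.
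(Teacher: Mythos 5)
Your identification of $I_F$ with a $C^2$ solution of $\Gamma_F(f)=-F$ via It\^o/Feynman--Kac, and your comparison-theorem argument for monotonicity, coincide with what the paper does. Where you genuinely diverge is in the treatment of concavity and of the bound $0\le I_F'\le 1$: you differentiate the stochastic flow, obtaining the first-variation process $Y_t^x=\exp\left(-\int_0^t F'(U_s^x)\,\mathrm ds\right)\in(0,1]$ and the identity $I_F'(x)=1-q\,\mathbf{E}_x\left[\int_0^\infty \mathrm e^{-qt}Y_t\,\mathrm dt\right]$, from which both bounds and the monotonicity of $I_F'$ (hence concavity) drop out at once. The paper instead argues entirely pathwise, never differentiating the flow: concavity comes from comparing $U^{\lambda x+(1-\lambda)y}$ with the convex combination $\lambda U^x+(1-\lambda)U^y$, whose drift is dominated by $F$ applied to its own position precisely because $F$ is concave; and $I_F'\le 1$ comes from the coupling observation that $\int_0^{\kappa^h}\left(F(U_s^{x+h})-F(U_s^x)\right)\mathrm ds\le h$ for the two copies started at $x$ and $x+h$. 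Your route buys a cleaner, quantitative representation of $I_F'$ and a unified derivation of all the claimed properties; the paper's route buys freedom from any regularity of $F'$, which matters here because the standing hypotheses make $F$ only differentiable and Lipschitz, so $F'$ need not be continuous and the differentiability of the flow (and the formula for $Y^x$) is not automatic --- a gap you correctly flag yourself. Your proposed remedy (prove everything for smooth concave Lipschitz approximants $F_n$ and pass to the limit) does close it: concavity and the bound $0\le I_{F_n}'\le 1$ survive pointwise limits, and the $C^2$ regularity of $I_F$ itself is supplied independently by the ODE step; just make sure the stability $I_{F_n}\to I_F$ is actually established (uniform Lipschitz constants for the drifts plus Gr\"onwall and dominated convergence suffice).
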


\begin{proof}
Under our assumptions on $F$, it is known that there exists a twice continuously differentiable solution $f$ to~\eqref{nonhomogeneous}. Applying Ito's Lemma to $\mathrm e^{-qt} f(U_t)$ and taking expectations, we obtain
\begin{equation*}
\mathbf{E}_x\left[\mathrm e^{-qt} f(U_t)\right] = f(x) - \esub{x}\left[\int_0^t \mathrm e^{-qs} F(U_s) \mathrm{d}s \right]. 
\end{equation*}
Letting $t \to \infty$, we get $f(x) = I_F(x)$. 

In the rest of the proof, we will use the notation $U^{x}$ for the solution to
\begin{equation}\label{notation}
\mathrm{d}U_t^x = (\mu - F(U_t^x))\mathrm{d}t + \sigma \mathrm{d}W_t, \quad U_0^x = x.
\end{equation}
This is the dynamics given in~\eqref{allin}.

Fix $x < y$. Note that $U_0^x < U_0^y$ and define $\kappa = \inf\set{t > 0 \colon U_t^x = U_t^y}$. Since $U_t^x < U_t^y$ for all $0 \leq t \leq \kappa$ and $F$ is increasing, we have
\begin{align*}
I_F(y) &= \mathbf{E}\left[ \int_0^\infty \mathrm{e}^{-qt} F(U_t^{y}) \mathrm{d}t\right] \\
&= \mathbf{E}\left[ \int_0^\kappa \mathrm{e}^{-qt} F(U_t^{y}) \mathrm{d}t\right] + \mathbf{E}\left[ \int_\kappa^\infty \mathrm{e}^{-qt} F(U_t^{y}) \mathrm{d}t\right] \\
&\geq \mathbf{E}\left[ \int_0^\kappa \mathrm{e}^{-qt} F(U_t^{x}) \mathrm{d}t\right] + \mathbf{E}\left[ \int_\kappa^\infty \mathrm{e}^{-qt} F(U_t^{x}) \mathrm{d}t\right] \\
&= I_F(x) ,
\end{align*}
where we used the fact that, for $t \geq \kappa$, we have $U_t^x = U_t^y$. This proves that $I_F$ is increasing. 

Now, fix $x, y \in \R$ and $\lambda \in [0,1]$. Define $z = \lambda x + (1 - \lambda)y$ and
\begin{equation*}
Y_t^z = \lambda U_t^x + (1 - \lambda) U_t^y .
\end{equation*}
By linearity, we have
\begin{equation*}
\mathrm{d}Y_t^z = (\mu - l_t^{Y}) \mathrm{d}t + \sigma \mathrm{d}W_t,  \quad Y_0^z = z,
\end{equation*}
where $l_t^{Y} := \lambda F(U_t^x) + (1 - \lambda) F(U_t^y)$. Since $F$ is concave, we have that $l_t^Y \leq F(Y_t^z)$ for all $t \geq 0$. We want to prove that, almost surely, we have $Y_t^z \geq U_t^z$ for all $t \geq 0$.
First, for all $t \geq 0$, we have
\begin{equation}\label{difference}
Y_t^z - U_t^z = \int_0^t \left(F(U_s^z) - l_s^{Y}\right)\mathrm{d}s.
\end{equation}
Note that, by the concavity of $F$, since $U_0^z = Y_0^z = z$, we have $F(U_0^z) \geq l_0^{Y}$. Now, define
\begin{equation*}
\tau = \inf\set{t > 0 \colon F(U_t^z) > l_t^{Y}} .
\end{equation*}
On $\set{\tau = +\infty}$, we have $Y_t^z = U_t^z$ for all $t$. On $\set{\tau < \infty}$, since the mapping $s \mapsto F(U_s^z) - l_s^{Y}$ is continuous almost surely, it follows from~\eqref{difference} that there exists $\epsilon > 0$ for which $Y_t^z > U_t^z$ for all $t \in ]\tau, \tau + \epsilon[$. But we can apply the same argument for another time point $s > \tau$: if there exists $s>\tau$ for which $Y_s^z = U_s^z$, then either we have $Y_t^z=U_t^z$ for all $t > s$, or there exists $s', \epsilon' > 0$, for which we have
\begin{equation*}
\twopartdef{Y_t^z = U_t^z}{s \leq t \leq s',}{Y_t^z > U_t^z}{s' < t < s'+\epsilon'.}
\end{equation*}
This proves that, for all $t \geq 0$, we have $Y_t^z \geq U_t^z$, which is equivalent to
\begin{equation*}
\int_0^t F(U_s^z) \mathrm{d}s \geq \int_0^t \left(\lambda F(U_s^x) + (1-\lambda) F(U_s^y)\right)\mathrm{d}s.
\end{equation*}
Since this is true for all $t \geq 0$, we further have that, for all $t \geq 0$,
\begin{equation}\label{justifier}
\int_0^t \mathrm e^{-qs} F(U_s^z) \mathrm{d}s \geq \int_0^t \mathrm e^{-qs}\left(\lambda F(U_s^x) + (1-\lambda) F(U_s^y)\right) \mathrm{d}s .
\end{equation}
Letting $t \to \infty$, it follows that
\begin{equation*}
I_F(\lambda x + (1-\lambda)y) \geq \lambda I_F(x) + (1-\lambda)I_F(y),
\end{equation*}
proving the concavity of $I_F$. 

Let $x, h \geq 0$ and define
\begin{equation*}
\kappa^h = \inf\set{t > 0 : U_t^{x+h} = U_t^x}.
\end{equation*}
We can write
\begin{multline*}
I_F(x+h) - I_F(x) = \mathbf{E} \left[ \int_0^\infty \mathrm e^{-qt} \left(F(U_t^{x+h}) - F(U_t^{x})\right) \mathrm{d}t \right] \\
= \mathbf{E} \left[\int_0^{\kappa^h} \mathrm e^{-qt} \left(F(U_t^{x+h}) - F(U_t^{x})\right) \mathrm{d}t \right] + \mathbf{E} \left[\int_{\kappa^h}^{\infty} \mathrm e^{-qt} \left(F(U_t^{x+h}) - F(U_t^{x})\right)\mathrm{d}t \right].
\end{multline*} 

If $\kappa^h$ is finite, the second expectation is zero, because for all $t \geq \kappa^h$ and for all $\omega \in \Omega$, we have $U_t^{x+h}(\omega) = U_t^{x}(\omega)$. Now, on $\set{\kappa^h < \infty}$, we have
\begin{equation*}
\int_0^{\kappa^h} \left(F(U_s^{x+h}) - F(U_s^{x}) \right)\mathrm{d}s = h,
\end{equation*}
which implies that
\begin{equation*}
\int_0^{\kappa^h} \mathrm e^{-qs} \left(F(U_s^{x+h}) - F(U_s^{x}) \right)\mathrm{d}s \leq  h.
\end{equation*}
On $\set{\kappa^h = \infty}$, we have
\begin{equation*}
\int_0^{t} \mathrm e^{-qs} \left(F(U_s^{x+h}) - F(U_s^{x}) \right)\mathrm{d}s < h,
\end{equation*}
for all $t \geq 0$, which implies that
\begin{equation*}
\int_0^{\infty} \mathrm e^{-qs} \left(F(U_s^{x+h}) - F(U_s^{x}) \right)\mathrm{d}s \leq h.
\end{equation*}
In conclusion, we have
\begin{equation*}
\mathbf{E}\left[\int_0^{\kappa^h} \mathrm e^{-qt} \left(F(U_t^{x+h}) - F(U_t^{x})\right) \mathrm{d}t \right] \leq h.
\end{equation*}
Letting $h \to 0$, we find that $I_F'(x) \leq 1$.
\end{proof}


Finally, here is a Verification Lemma for the stochastic control problem. The proof is given in Appendix A.
\begin{lem}\label{verificationlemma}
Let $\pi^* \in \Pi^F$ be such that $V_{\pi^*} \in C^2(0, \infty)$, $V_{\pi^*}'$ is bounded and, for all $x > 0$,
\begin{equation}\label{verificationsup}
\frac{\sigma^2}{2}V_{\pi^*}''(x) + \mu V_{\pi^*}'(x) - qV_{\pi^*}(x) + \sup_{0 \leq u \leq F(x)} u(1 - V_{\pi^*}'(x)) = 0.
\end{equation}
Then, $\pi^*$ is an optimal strategy. In this case, $V \in C^2(0, +\infty)$, $V'$ is bounded and $V$ satisfies~\eqref{verificationsup}. 
\end{lem}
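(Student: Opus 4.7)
The plan is a standard verification argument: take an arbitrary admissible strategy $\pi \in \Pi^F$ and compare $V_\pi(x)$ to $V_{\pi^*}(x)$ by applying Itô's formula to the process $e^{-qt}V_{\pi^*}(U_t^\pi)$, then use the HJB-type equation~\eqref{verificationsup} satisfied by $V_{\pi^*}$ to bound the drift. Fix $x > 0$ and $\pi \in \Pi^F$. Note that $V_{\pi^*}(0)=0$ since all admissible strategies yield value zero at the origin. Because $V_{\pi^*} \in C^2(0,\infty)$ and $U^\pi$ has continuous paths, Itô's formula applied up to the stopping time $t \wedge \tau_0^\pi$ gives
\begin{equation*}
e^{-q(t\wedge\tau_0^\pi)}V_{\pi^*}(U_{t\wedge\tau_0^\pi}^\pi) = V_{\pi^*}(x) + \int_0^{t\wedge\tau_0^\pi} e^{-qs}\mathcal{A}^{l_s^\pi}V_{\pi^*}(U_s^\pi)\,\mathrm{d}s + M_t,
\end{equation*}
where $\mathcal{A}^{u}V_{\pi^*}(y) := \tfrac{\sigma^2}{2}V_{\pi^*}''(y)+(\mu - u)V_{\pi^*}'(y)-qV_{\pi^*}(y)$ and $M_t = \sigma\int_0^{t\wedge\tau_0^\pi} e^{-qs}V_{\pi^*}'(U_s^\pi)\,\mathrm{d}W_s$ is a true martingale by the boundedness hypothesis on $V_{\pi^*}'$.

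Next, I would exploit equation~\eqref{verificationsup}: rewriting the supremum, it says that for every $y > 0$ and every $0 \leq u \leq F(y)$,
\begin{equation*}
\mathcal{A}^{u}V_{\pi^*}(y) + u \leq 0,
\end{equation*}
with equality attained by the maximizer. Applying this pointwise with $y = U_s^\pi$ and $u = l_s^\pi \in [0,F(U_s^\pi)]$, the drift term above is dominated by $-\int_0^{t\wedge\tau_0^\pi} e^{-qs}l_s^\pi\,\mathrm{d}s$. Taking expectations $\mathbf{E}_x$, the martingale contribution vanishes, yielding
\begin{equation*}
\mathbf{E}_x\!\left[e^{-q(t\wedge\tau_0^\pi)}V_{\pi^*}(U_{t\wedge\tau_0^\pi}^\pi)\right] \leq V_{\pi^*}(x) - \mathbf{E}_x\!\left[\int_0^{t\wedge\tau_0^\pi} e^{-qs}l_s^\pi\,\mathrm{d}s\right].
\end{equation*}

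To pass to $t \to \infty$, I would use that $V_{\pi^*}$ is Lipschitz on $[0,\infty)$ (since $V_{\pi^*}'$ is bounded and $V_{\pi^*}(0)=0$), so $|V_{\pi^*}(U_{t\wedge\tau_0^\pi}^\pi)|$ grows at most linearly in $U_{t\wedge\tau_0^\pi}^\pi \leq X_{t\wedge\tau_0^\pi}$; together with the exponential discount factor $e^{-qt}$, dominated convergence applies. On $\{\tau_0^\pi < \infty\}$ the left-hand side converges to $e^{-q\tau_0^\pi}V_{\pi^*}(0)=0$, and on $\{\tau_0^\pi=\infty\}$ it converges to zero as well. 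The monotone convergence theorem handles the integral on the right. One gets $V_\pi(x) \leq V_{\pi^*}(x)$. Since $\pi^*$ realizes the supremum in~\eqref{verificationsup}, the same computation for $\pi=\pi^*$ yields equality throughout, so $V_{\pi^*}(x)=V(x)$ and $\pi^*$ is optimal, with $V$ inheriting the regularity of $V_{\pi^*}$ and satisfying~\eqref{verificationsup}.

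The main obstacle I expect is the integrability control needed to justify passing to the limit — in particular, bounding $\mathbf{E}_x[e^{-qt}|V_{\pi^*}(U_t^\pi)|]$ uniformly and handling the boundary at $0$ where $C^2$ regularity is not assumed. This is where the boundedness of $V_{\pi^*}'$ is essential: it gives the linear growth of $V_{\pi^*}$ (hence the dominated convergence step), the martingale property of the stochastic integral, and the continuous extension of $V_{\pi^*}$ to $[0,\infty)$ compatible with $V_{\pi^*}(0)=0$.
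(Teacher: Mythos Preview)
Your proposal is correct and follows essentially the same verification argument as the paper: apply It\^o's formula to $e^{-qt}V_{\pi^*}(U_t^\pi)$, use the HJB inequality pointwise, take expectations (the stochastic integral vanishes by boundedness of $V_{\pi^*}'$), and pass to the limit $t\to\infty$. In fact you give more justification for the limiting step than the paper does---the paper simply writes ``Letting $t\to\infty$'' without discussing dominated convergence or the behaviour on $\{\tau_0^\pi=\infty\}$---so your version is, if anything, more complete.
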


\section{Generalized mean-reverting strategies}

Since the Hamilton-Jacobi-Bellman equation in~\eqref{verificationsup} is linear with respect to the control variable, we expect a bang-bang strategy to be optimal. Further, since from modelling reasons we expect the optimal value fonction $V$ to be concave, then an optimal strategy must be of the form
\begin{equation}\label{guess2}
l_s^\pi = \twopartdef{F(U_s^\pi)}{U_s^\pi > b,}{0}{U_s^\pi < b,}
\end{equation}
for some $b \geq 0$ to be determined.

%

Consequently, and following the line of reasoning in \cite{renaud-simard_2021}, let us define the family of \textit{generalized mean-reverting strategies}. For a fixed $b \geq 0$, define the generalized mean-reverting strategy $\pi_b$ and the corresponding controlled process $U^b := U^{\pi_b}$ by
\begin{equation}\label{u^b}
\mathrm{d}U_t^b = \left(\mu - F(U_t^b)\I{U_t^b > b}\right)\mathrm{d}t + \sigma \mathrm{d}W_t .
\end{equation} 
In other words,  the control rate $l^b := l^{\pi_b}$ associated to $\pi_b$ is given by
\[
l^b_t = F(U_t^b) \I{U_t^b > b} .
\]
Similarly, we define the corresponding value function by $V_b := V_{\pi_b}$.

Note that, if $b = 0$, then $U^0 = U$, with $U$ already defined in~\eqref{allin}.  


\begin{rem}
As discussed in Remark~\ref{remark}, there exists a unique strong solution to the SDE given in~\eqref{allin}. When $b>0$, the drift function is not necessarily continuous, but it can be shown, using for example the same steps as in \cite{renaud-simard_2021}, that a strong solution exists for~\eqref{u^b}. 
\end{rem}

The next proposition gives the value function of a generalized mean-reverting strategy.

\begin{prop}\label{valuefunctionprop}
The value function $V_0$ of the generalized mean-reverting strategy $\pi_0$ is continuously differentiable and given by
\begin{equation}\label{valuefunctionb=0}
V_0(x) = I_F(x) - I_F(0) \varphi_F(x), \quad x \geq 0.
\end{equation}
If $b > 0$, then the value function $V_b$ of the generalized mean-reverting strategy $\pi_b$ is continuously differentiable and given by
\begin{equation}\label{valuefunctionb>0}
V_b(x) = \twopartdef{C_1(b) \psi(x)}{0 \leq x \leq b,}{I_F(x) + C_2(b)\varphi_F(x)}{x \geq b,}
\end{equation}
where
\begin{align*}
C_1(b) &= \frac{I_F'(b) \varphi_F(b) - I_F(b) \varphi_F'(b)}{\psi'(b) \varphi_F(b) - \psi(b) \varphi_F'(b)} ,\\
C_2(b) &= \frac{I_F'(b) \psi(b) - I_F(b) \psi'(b)}{\psi'(b) \varphi_F(b) - \psi(b) \varphi_F'(b)}.
\end{align*}
\end{prop}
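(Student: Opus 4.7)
For the case $b=0$, the controlled process $U^0$ coincides with the nonlinear Ornstein-Uhlenbeck process $U$ from~\eqref{allin}, and the control rate $l^0_t = F(U_t)\mathbf{1}_{\{U_t > 0\}}$ equals $F(U_t)$ for $t<\tau^F_0$. I would start from $V_0(x) = \mathbf E_x\bigl[\int_0^{\tau^F_0} e^{-qt}F(U_t)\,dt\bigr]$, split $I_F(x)=\mathbf E_x\bigl[\int_0^\infty e^{-qt}F(U_t)\,dt\bigr]$ at $\tau^F_0$, and apply the strong Markov property of $U$ at that stopping time to obtain
\[
I_F(x)-V_0(x) = \mathbf E_x\bigl[e^{-q\tau^F_0}\mathbf{1}_{\{\tau^F_0<\infty\}}\bigr]\,I_F(0) = \varphi_F(x)\,I_F(0),
\]
which is exactly~\eqref{valuefunctionb=0}. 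Continuous differentiability then follows at once from $I_F\in C^2$ (Proposition~\ref{associate}) and $\varphi_F\in C^3$.

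For the case $b>0$, a guess-and-verify approach seems natural. First I would define a candidate $\tilde V$ by the right-hand side of~\eqref{valuefunctionb>0} and pick $C_1(b),C_2(b)$ by requiring $\tilde V$ and $\tilde V'$ to be continuous at $b$. This is a $2\times 2$ linear system whose unique solution reproduces the formulas stated in the proposition. By construction, $\tilde V$ is $C^1$ on $[0,\infty)$ and $C^2$ off $\{b\}$, satisfies $\tilde V(0)=0$ because $\psi(0)=0$, and obeys the piecewise ODE $\Gamma\tilde V=0$ on $(0,b)$ together with $\Gamma_F\tilde V=-F$ on $(b,\infty)$, using $\Gamma\psi=0$, $\Gamma_F I_F=-F$, and $\Gamma_F\varphi_F=0$.

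To verify $\tilde V=V_b$, the plan is to apply a generalized It\^o formula to $e^{-qt}\tilde V(U^b_t)$ on $[0,t\wedge\tau_0^{\pi_b}]$. Because $U^b$ is a diffusion with nondegenerate coefficient $\sigma$, it spends zero Lebesgue time at the single point $b$, so the drift part of $d\bigl(e^{-qt}\tilde V(U^b_t)\bigr)$ reduces to $e^{-qt}\Gamma(\tilde V)(U^b_t)\mathbf{1}_{\{U^b_t<b\}}\,dt + e^{-qt}\Gamma_F(\tilde V)(U^b_t)\mathbf{1}_{\{U^b_t>b\}}\,dt = -e^{-qt}F(U^b_t)\mathbf{1}_{\{U^b_t>b\}}\,dt = -e^{-qt}l^b_t\,dt$. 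The stochastic integral is a true martingale up to $t\wedge\tau_0^{\pi_b}$ because $\tilde V'$ is bounded on $[0,\infty)$; this uses $0\leq I_F'\leq 1$ from Proposition~\ref{associate} together with the convexity and monotonicity of $\varphi_F$. Taking expectations yields
\[
\tilde V(x) = \mathbf E_x\bigl[e^{-q(t\wedge\tau_0^{\pi_b})}\tilde V(U^b_{t\wedge\tau_0^{\pi_b}})\bigr] + \mathbf E_x\Bigl[\int_0^{t\wedge\tau_0^{\pi_b}} e^{-qs}l^b_s\,ds\Bigr].
\]

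Finally, I would let $t\to\infty$. On $\{\tau_0^{\pi_b}<\infty\}$ the first term equals $e^{-q\tau_0^{\pi_b}}\tilde V(0)=0$; on $\{\tau_0^{\pi_b}=\infty\}$ the at-most-affine growth of $\tilde V$ (a consequence of $I_F(x)\leq I_F(0)+x$ and boundedness of $\varphi_F$ on $[b,\infty)$) combined with the stochastic dominance $U^b_t\leq x+\mu t+\sigma W_t$ makes that term vanish under the discount $e^{-qt}$, while monotone convergence sends the second term to $V_b(x)$. Equality $\tilde V=V_b$ follows. The step I expect to be the main obstacle is justifying the generalized It\^o formula at the kink $x=b$---where $\tilde V''$ typically jumps since $C_1(b)\psi''(b)\neq I_F''(b)+C_2(b)\varphi_F''(b)$---together with the dominated/monotone convergence arguments for the limit $t\to\infty$, both of which hinge on the linear growth of $\tilde V$ and the boundedness of $\tilde V'$.
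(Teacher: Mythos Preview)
Your argument for $b=0$ is essentially the same as the paper's. For $b>0$, however, you take a genuinely different route. The paper does not guess-and-verify via It\^o's formula; instead it derives the two-piece form directly from the strong Markov property, writing $V_b(x)=\frac{\psi(x)}{\psi(b)}V_b(b)$ for $x\le b$ and $V_b(x)=I_F(x)+\frac{\varphi_F(x)}{\varphi_F(b)}(V_b(b)-I_F(b))$ for $x\ge b$, with the single unknown $V_b(b)$. To compute $V_b(b)$, the paper introduces a perturbation: auxiliary admissible strategies $\pi_b^n$ that pay at the maximal rate above $b$ but only stop paying once the process falls below $b-1/n$. Two Markovian identities express $V_b^n(b)$ as a ratio of finite differences of $I_F$, $\varphi_F$ and $\psi$, and letting $n\to\infty$ (a $0/0$ limit) produces $V_b(b)$ as a ratio of derivatives, which then yields the stated $C_1(b),C_2(b)$. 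The $C^1$ property falls out a posteriori.

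Your approach is the classical verification paradigm: impose $C^1$ smooth fit to determine $C_1,C_2$ (same linear system, same answer), and check the candidate against $V_b$ by It\^o. This is shorter and more familiar from control theory, but it forces you to handle It\^o's formula for a function that is only $C^1$ at $b$; you correctly flag this as the main obstacle, and since $\tilde V'$ is absolutely continuous with a single jump in $\tilde V''$ while $\sigma>0$ ensures zero occupation time at $b$, the generalized It\^o/Peskir change-of-variable formula does the job. The paper's route avoids this technicality entirely by never applying It\^o in the proof of this proposition; the price it pays is the extra perturbation construction and the limit $n\to\infty$, which in effect rediscovers the smooth-fit condition probabilistically rather than imposing it.
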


\begin{proof}
The proof follows the same steps as the one for Proposition~2.1 in \cite{renaud-simard_2021}.

Fix $b > 0$. Using the strong Markov property, we have, for $x \leq b$,
\begin{equation*}
V_b(x) = \esub{x}\left[\mathrm{e}^{-q \tau_b} \I{\tau_b < \tau_0} \right] V_b(b) .
\end{equation*}
Using the strong Markov property again, we get, for all $x > b$,
\begin{equation*}
V_b(x) = \esub{x}\left[ \int_0^\infty \mathrm{e}^{-qt} F(U_t) \mathrm{d}t \right] + \esub{x}\left[\mathrm{e}^{-q \tau^F_0} \I{\tau^F_0 < \infty}\right]\left(\frac{V_b(b) - \esub{b}\left[ \int_0^\infty \mathrm{e}^{-qt} F(U_t) \mathrm{d}t \right]}{\esub{b}\left[\mathrm{e}^{-q \tau^F_0} \I{\tau^F_0 < \infty}\right]}\right) .
\end{equation*}

Consequently, we can write
\begin{equation}\label{twopart}
V_b(x) = \twopartdef{\frac{\psi(x)}{\psi(b)} V_b(b)}{0 \leq x \leq b,}{I_F(x) + \frac{\varphi_F(x)}{\varphi_F(b)}(V_b(b) - I_F(b))}{x \geq b.}
\end{equation}

To conclude, we need to compute $V_b(b)$. For $n \in \N$ sufficiently large, consider the strategy $\pi_b^n$ consisting of using the maximal control rate $F(U_t^{\pi_b^n})$ when the controlled process is above $b$, until it goes below $b - 1/n$. We apply again the maximal control rate when the controlled process reaches $b$ again. Note that $\pi_b^n$ is admissible. We denote its value function by $V_b^n$. We can show that: \begin{equation*}
\lim_{n \to \infty} V_b^n(b) = V_b(b).
\end{equation*}

Using similar arguments as above, we can write
\begin{equation*}
V_b^n(b - 1/n) = \psi_b(b-1/n) V_b^n(b)
\end{equation*}
and
\begin{align*}
V_b^n(b) &= \esub{b}{\left[ \int_0^{\tau_{b-1/n}^{F}} e^{-qt} F(U_t^{\pi_b^n}) \mathrm{d}t\right]} + \esub{b}{\left[e^{-q \tau_{b-1/n}^{F}} \I{\tau_{b-1/n}^{F} < \infty}\right]}V_b^n(b-1/n) \\
&= I_F(b) + \frac{\varphi_F(b)}{\varphi_F(b-1/n)}\left(V_b^n(b-1/n) - I_F(b-1/n)\right).
\end{align*}
Solving for $V_b^n(b)$, we find
\begin{equation*}
V_b^n(b) = \frac{I_F(b-1/n)\varphi_F(b) - I_F(b) \varphi_F(b-1/n)}{\psi_b(b-1/n)\varphi_F(b) - \psi_b(b) \varphi_F(b-1/n)} ,
\end{equation*}
where $\psi_b(x)=\psi(x)/\psi(b)$.
Define
\begin{equation*}
G(y) = I_F(b-y)\varphi_F(b) - I_F(b) \varphi_F(b-y)
\end{equation*}
and
\begin{equation*}
H(y) = \psi_b(b-y) \varphi_F(b) - \psi_b(b) \varphi_F(b-y).
\end{equation*}
Note that $G(0) = H(0) = 0$. As $\psi_b, \varphi_F$ and $I_F$ are differentiable functions, dividing the numerator and the denominator by $1/n$ and taking the limit yields
\begin{equation*}
V_b(b) = \lim_{n \to \infty} V_b^n(b) = \frac{G'(0+)}{H'(0+)},
\end{equation*}
which leads to
\begin{equation}\label{vbb}
V_b(b) = \frac{I_F'(b) \varphi_F(b) - I_F(b) \varphi_F'(b)}{\psi_b'(b) \varphi_F(b) - \psi_b(b) \varphi_F'(b)}.
\end{equation}

Substituting~\eqref{vbb} into~\eqref{twopart}, we get:
\begin{equation}\label{closetoVb}
V_b(x) = \twopartdef{K_1(b) \psi_b(x)}{0 \leq x \leq b,}{I_F(x) + K_2(b) \varphi_F(x)}{x \geq b,}
\end{equation}
where
\begin{equation*}
K_1(b) = \frac{I_F'(b) \varphi_F(b) - I_F(b) \varphi_F'(b)}{\psi_b'(b) \varphi_F(b) - \psi_b(b) \varphi_F'(b)} \quad \text{and} \quad K_2(b) = \frac{I_F'(b) \psi_b(b) - I_F(b) \psi_b'(b)}{\psi_b'(b) \varphi_F(b) - \psi_b(b) \varphi_F'(b)}.
\end{equation*}

Using the definition of $\psi_b$, \eqref{closetoVb} can be rewritten as follows:
\begin{equation*}
V_b(x) = \twopartdef{C_1(b) \psi(x)}{0 \leq x \leq b,}{I_F(x) + C_2(b)\varphi_F(x)}{x \geq b.}
\end{equation*}

It is straightforward to check that $V_b \in C^1(0, +\infty)$, since elementary algebraic manipulations lead to $V_b'(b-) = V_b'(b+)$.

When $b=0$, using Markovian arguments as above, we can verify that
\begin{equation*}
V_0(x) = I_F(x) - I_F(0)\varphi_F(x), \quad x \geq 0,
\end{equation*}
from which it is clear that $V_0 \in C^1(0, +\infty)$.
\end{proof}

\section{Main results}

We are now ready to provide a solution to the general control problem. As mentioned before, depending on the set of parameters, an optimal strategy will be given by the generalized mean-reverting strategy $\pi_0$ or by a generalized mean-reverting strategy $\pi_{b^\ast}$, for a barrier level $b^\ast$ to be determined.


First, let us consider the situation in which the parameters are such that
\begin{equation}\label{premiercas}
I_F'(0) - I_F(0) \varphi_F'(0) \leq 1.
\end{equation}
Recalling from Proposition~\ref{valuefunctionprop}, that
\[
V_0(x) = I_F(x) - I_F(0) \varphi_F(x), \quad x \geq 0,
\]
we deduce that $V_0'(0) \leq 1$. 

Also, we have that $V_0$ is a concave function. Indeed,  using the notation introduced in~\eqref{notation}, we can write
\begin{equation*}
V_0(x) = \mathbf{E}\left[\int_0^{\tau_0^x} \mathrm e^{-qt} F(U_t^x) \mathrm{d}t\right] ,
\end{equation*}
where $\tau^x_0 = \inf\set{t > 0 \colon U^x_t = 0}$. Since the inequality in~\eqref{justifier} holds for all $t \geq 0$, it holds true also for the stopping time $\tau^x_0$. As a consequence, we have that $V_0$ is concave and we further have that $V_0'(x) \leq 1$ for all $x \geq 0$.

In conclusion, all the conditions of the Verification Lemma are satisfied and thus the generalized mean-reverting $\pi_0$ is an optimal strategy.


Now, let us consider the situation in which the parameters are such that
\begin{equation}\label{deuxiemecas}
I_F'(0) - I_F(0) \varphi_F'(0) > 1.
\end{equation}
From the Hamilton-Jacobi-Bellman equation~\eqref{verificationsup}, we expect an optimal barrier level $b^*$ to be given by
\begin{equation*}
V_{b^*}'(b^*) = 1 ,
\end{equation*}
which is equivalent to
\begin{equation}\label{solution}
I_F(b^*) - \frac{I_F'(b^*) \varphi_F(b^*)}{\varphi_F'(b^*)} = \frac{\psi(b^*)}{\psi'(b^*)} - \frac{\varphi_F(b^*)}{\varphi_F'(b^*)}.
\end{equation}

The next proposition is one of the most important results.
\begin{prop}\label{propsolution}
If $I_F'(0) - I_F(0)\varphi_F'(0) > 1$, then there exists a solution $b^* \in (0,\hat{b}]$ to~\eqref{solution}.
\end{prop}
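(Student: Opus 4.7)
The plan is to reduce~\eqref{solution} to a scalar equation $h(b^\ast)=0$ for a continuous function $h$ on $[0,\hat b]$, and apply the intermediate value theorem, where $\hat b$ is the inflection point of $\psi$ from Lemma~\ref{AnalyticLemma}(a). A convenient choice is
\[
h(b) := I_F(b) + \frac{(1-I_F'(b))\varphi_F(b)}{\varphi_F'(b)} - \frac{\psi(b)}{\psi'(b)}.
\]
A direct rearrangement shows that $h(b)=0$ is equivalent to~\eqref{solution}, and $h$ is continuous on $[0,\hat b]$ because $\psi$, $\varphi_F$ and $I_F$ are $C^2$ (Lemma~\ref{AnalyticLemma} and Proposition~\ref{associate}), $\psi'(0)=1$, and $\varphi_F'$ is strictly negative.

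At $b=0$, using $\psi(0)=0$, $\psi'(0)=1$ and $\varphi_F(0)=1$, one obtains $h(0) = I_F(0) + (1-I_F'(0))/\varphi_F'(0)$. Since $\varphi_F'(0)<0$, multiplying this expression by $\varphi_F'(0)$ reverses the inequality, showing that $h(0)>0$ is equivalent to $I_F'(0) - I_F(0)\varphi_F'(0) > 1$, which is precisely the standing hypothesis~\eqref{deuxiemecas}.

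The key step is to establish $h(\hat b) \le 0$. The ODE $\Gamma(\psi)=0$ evaluated at $x=\hat b$, combined with $\psi''(\hat b)=0$, yields the identity $\psi(\hat b)/\psi'(\hat b) = \mu/q$. Substituting this into $h(\hat b)$ and then eliminating $I_F(\hat b)$ and $\varphi_F(\hat b)$ by means of the ODEs $\Gamma_F(I_F) = -F$ and $\Gamma_F(\varphi_F) = 0$ at $\hat b$, all the first-order terms involving $(\mu - F(\hat b))$ cancel, leaving the clean identity
\[
q\,\varphi_F'(\hat b)\, h(\hat b) = \frac{\sigma^2}{2}\Bigl[\varphi_F'(\hat b)\,I_F''(\hat b) + (1 - I_F'(\hat b))\,\varphi_F''(\hat b)\Bigr].
\]
The bracket is nonnegative: $\varphi_F'(\hat b)I_F''(\hat b) \ge 0$ since $\varphi_F'(\hat b) < 0$ (Lemma~\ref{AnalyticLemma}(b)) and $I_F''(\hat b) \le 0$ (concavity of $I_F$, Proposition~\ref{associate}); and $(1 - I_F'(\hat b))\varphi_F''(\hat b) \ge 0$ since $I_F'(\hat b) \le 1$ (Proposition~\ref{associate}) and $\varphi_F''(\hat b) > 0$ (Lemma~\ref{AnalyticLemma}(b)). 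Dividing by the negative number $q\,\varphi_F'(\hat b)$ yields $h(\hat b) \le 0$, and the intermediate value theorem then produces $b^\ast \in (0, \hat b]$ with $h(b^\ast) = 0$.

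The main obstacle is to guess the right manipulation for $h(\hat b)$: the naive bound, using only the sign of the middle term of $h$, gives $h(\hat b) \le I_F(\hat b) - \mu/q$, which is not in general nonpositive. The cancellation of the $(\mu - F(\hat b))$-terms after invoking the two ODEs simultaneously is the decisive algebraic fact that reduces the inequality to concavity of $I_F$ and convexity of $\varphi_F$.
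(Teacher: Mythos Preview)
Your argument is correct and follows the same overall strategy as the paper: apply the intermediate value theorem on $[0,\hat b]$ to the difference of the two sides of~\eqref{solution}, checking the sign at $0$ from the hypothesis and at $\hat b$ via the ODEs for $\psi$, $\varphi_F$, $I_F$ together with the convexity/concavity properties of Lemma~\ref{AnalyticLemma} and Proposition~\ref{associate}. The only difference is at $\hat b$: the paper first proves the auxiliary inequality $\psi(\hat b)/\psi'(\hat b) - \varphi_F(\hat b)/\varphi_F'(\hat b) \ge F(\hat b)/q$ and then chains it with a bound on $I_F(\hat b)$, whereas you substitute both ODEs simultaneously to obtain the exact identity $q\,\varphi_F'(\hat b)\,h(\hat b) = \tfrac{\sigma^2}{2}\bigl[\varphi_F'(\hat b)\,I_F''(\hat b) + (1 - I_F'(\hat b))\,\varphi_F''(\hat b)\bigr]$, from which the sign follows in one step; this is a slightly cleaner route to the same conclusion.
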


\begin{proof}
Define
\begin{equation*}
g(y) = I_F(y) - \frac{I_F'(y) \varphi_F(y)}{\varphi_F'(y)}
\end{equation*}
and
\begin{equation*}
h(y) = \frac{\psi(y)}{\psi'(y)} - \frac{\varphi_F(y)}{\varphi_F'(y)}.
\end{equation*}
We see that $g(0) > h(0)$ is equivalent to $I_F'(0) - I_F(0) \varphi_F'(0) > 1$.  We will show that $g(\hat{b}) \leq h(\hat{b})$. The result will follow from the Intermediate Value Theorem. First, we have the following inequality: \begin{equation}\label{inequalityb}
\frac{\psi(\hat{b})}{\psi'(\hat{b})} - \frac{\varphi_F(\hat{b})}{\varphi_F'(\hat{b})} \geq \frac{F(\hat{b})}{q}.
\end{equation}
Indeed, by definition of $\hat{b}$ and $\psi$, we have $\frac{\psi(\hat{b})}{\psi'(\hat{b})} = \frac{\mu}{q}$. Also, by definition of $\varphi_F$, we have
\begin{equation*}
\frac{\sigma^2}{2} \frac{\varphi_F''(\hat{b})}{\varphi_F'(\hat{b})} + \mu - q \frac{\varphi_F(\hat{b})}{\varphi_F'(\hat{b})} - F(\hat{b}) = 0.
\end{equation*} 
Since $\varphi_F$ is convex and decreasing, \eqref{inequalityb} follows. 

Now, using Proposition~\ref{associate}, we can write
\begin{equation*}
I_F(\hat{b}) = \frac{\sigma^2}{2q}I_F''(\hat{b}) + \frac{\mu}{q} I_F'(\hat{b}) - \frac{F(\hat{b})}{q}\left(I_F'(\hat{b}) - 1\right).
\end{equation*}
Since $I_F$ is concave, it follows that
\begin{equation*}
I_F(\hat{b}) \leq \frac{\mu}{q} I_F'(\hat{b}) - \frac{F(\hat{b})}{q}\left(I_F'(\hat{b}) - 1\right).
\end{equation*}
Also, since we have that $0 \leq I_F'(\hat{b}) \leq 1$, using \eqref{inequalityb} yields
\begin{equation*}
I_F(\hat{b}) \leq \frac{\mu}{q}I_F'(\hat{b}) - \left(\frac{\psi(\hat{b})}{\psi'(\hat{b})} - \frac{\varphi_F(\hat{b})}{\varphi_F'(\hat{b})}\right) \left(I_F'(\hat{b}) - 1\right).
\end{equation*}
This inequality is equivalent to $g(\hat{b}) \leq h(\hat{b})$.
\end{proof}

The next result states that if $b^*$ is a solution to~\eqref{solution}, as in the previous proposition, then $\pi_{b^*}$ satisfies the conditions of the Verification Lemma.

\begin{prop}
If $I_F'(0) - I_F(0) \varphi_F'(0) > 1$ and if $b^* \in (0, \hat{b}]$ is a solution to~\eqref{solution}, then $V_{b^*} \in C^2(0, +\infty)$ is concave. 
\end{prop}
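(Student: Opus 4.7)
The plan is to split the statement into two parts: the $C^2$ regularity of $V_{b^*}$ (which really only requires checking smoothness at the single point $x = b^*$, since the two pieces are clearly smooth on their respective open intervals) and the concavity, which will be treated separately on $[0, b^*]$ and on $[b^*, \infty)$. Throughout, I would use the two ODEs satisfied by $V_{b^*}$: on $(0, b^*)$ it satisfies $\Gamma(V_{b^*}) = 0$ since $V_{b^*} = C_1(b^*)\psi$, and on $(b^*, \infty)$ it satisfies $\Gamma_F(V_{b^*}) = -F$ since $V_{b^*} = I_F + C_2(b^*)\varphi_F$ with $\Gamma_F(I_F) = -F$ (Proposition~\ref{associate}) and $\Gamma_F(\varphi_F) = 0$. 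Proposition~\ref{valuefunctionprop} already gives $V_{b^*} \in C^1(0, +\infty)$, and the defining equation~\eqref{solution} of $b^*$ is equivalent to $V_{b^*}'(b^*) = 1$.

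For the $C^2$ matching at $b^*$, I would simply evaluate each ODE at $x = b^*$ using $V_{b^*}'(b^*) = 1$. From the left one finds $\frac{\sigma^2}{2} V_{b^*}''(b^*-) = qV_{b^*}(b^*) - \mu$; from the right one finds $\frac{\sigma^2}{2} V_{b^*}''(b^*+) = qV_{b^*}(b^*) - (\mu - F(b^*)) - F(b^*) = qV_{b^*}(b^*) - \mu$. Equality follows; continuity of higher derivatives on each open half is immediate since $\psi, \varphi_F, I_F \in C^2$ and $F$ is differentiable.

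Concavity on $[0, b^*]$ follows because $b^* \le \hat{b}$, so $\psi$ is concave on $[0, b^*]$ by Lemma~\ref{AnalyticLemma}(a), and $C_1(b^*) > 0$. The latter is immediate from $V_{b^*}'(b^*-) = C_1(b^*) \psi'(b^*) = 1$ together with $\psi'(b^*) > 0$, giving $C_1(b^*) = 1/\psi'(b^*) > 0$.

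The main obstacle is concavity on $[b^*, \infty)$, because $V_{b^*} = I_F + C_2(b^*) \varphi_F$ mixes the concave function $I_F$ with the strictly convex function $\varphi_F$ (Lemma~\ref{AnalyticLemma}(b) and Proposition~\ref{associate}). The key observation is that the sign of $C_2(b^*)$ is forced to be nonpositive. Indeed, differentiating the right-piece formula and evaluating at $b^*$ gives
\[
1 = V_{b^*}'(b^*) = I_F'(b^*) + C_2(b^*)\varphi_F'(b^*).
\]
By Proposition~\ref{associate} we have $I_F'(b^*) \le 1$, so $C_2(b^*)\varphi_F'(b^*) = 1 - I_F'(b^*) \ge 0$. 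Since $\varphi_F'(b^*) < 0$ by strict monotonicity, this forces $C_2(b^*) \le 0$. Hence, for $x > b^*$,
\[
V_{b^*}''(x) = I_F''(x) + C_2(b^*)\varphi_F''(x) \le 0,
\]
since $I_F'' \le 0$ (concavity of $I_F$, Proposition~\ref{associate}), $\varphi_F'' > 0$ (convexity, Lemma~\ref{AnalyticLemma}(b)), and $C_2(b^*) \le 0$. Combined with the concavity on $[0, b^*]$ and the $C^2$ matching at $b^*$, this yields concavity of $V_{b^*}$ on all of $(0, +\infty)$.
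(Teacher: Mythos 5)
Your proposal is correct and follows essentially the same route as the paper: it uses the two ODEs to match the one-sided second derivatives at $b^*$ via $V_{b^*}'(b^*)=1$, obtains $C_1(b^*)=1/\psi'(b^*)>0$ and $C_2(b^*)=(1-I_F'(b^*))/\varphi_F'(b^*)\leq 0$, and then concludes concavity piecewise from the concavity of $\psi$ on $(0,\hat b)$, the concavity of $I_F$, and the convexity of $\varphi_F$. The only cosmetic difference is that you compute $V_{b^*}''(b^*\pm)$ explicitly from the ODEs rather than writing the matching condition as a linear identity, which is the same manipulation.
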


\begin{proof}
This proof relies heavily on the analytical properties of $\psi, \varphi_F$ and $I_F$ obtained in Lemma~\ref{AnalyticLemma} and Proposition~\ref{associate}.

First, let us show that $V_{b^*} \in C^2(0, +\infty)$. From Proposition~\ref{valuefunctionprop}, we deduce that
\begin{equation*}
V_{b^*}''(x) = \twopartdef{C_1(b^*) \psi''(x)}{0 < x < b^*,}{I_F''(x) + C_2(b^*)\varphi_F''(x)}{x > b^*.}
\end{equation*}
Therefore, since $\psi$, $\varphi_F$ and $I_F$ are twice continuously differentiable functions,  it is sufficient to show that $V_{b^*}''(b^*-) = V_{b^*}''(b^*+)$. We also have that $\psi$, $\varphi_F$ and $I_F$ are solutions to second-order ODEs, so it is equivalent to show that
\begin{multline*}
\mu\left[C_1(b^*) \psi'(b^*) - C_2(b^*) \varphi_F'(b^*) - I_F'(b^*)\right] \\
- q\left[C_1(b^*) \psi(b^*) - C_2(b^*) \varphi_F(b^*) - I_F(b^*)\right] \\
+ F(b^*)\left[C_2(b^*) \varphi_F'(b^*) - I_F'(b^*) - 1\right] = 0.
\end{multline*}
The statement follows from the fact that $V_{b^*}$ is continuously differentiable at $x = b^*$ and because $V_{b^*}'(b^*+) = 1$.

Now, let us show that $V_{b^*}$ is concave. Since $V_{b^*}'(b^*) = 1$, it follows directly that
\begin{equation}\label{c1}
C_1(b^*) = \frac{1}{\psi'(b^*)}
\end{equation}
and
\begin{equation}\label{c2}
C_2(b^*) = \frac{1 - I_F'(b^*)}{\varphi_F'(b^*)}.
\end{equation}
Using the analytical properties of $\psi, \varphi_F$ and $I_F$, it is clear that $C_2(b^*) \leq 0 < C_1(b^*)$. Since $b^* \leq \hat{b}$, we have that $\psi$ is concave on $(0, b^*)$, and so $V_{b^*}''(x) \leq 0$, for all $x \in (0, b^*)$. Finally, since $I_F$ is concave, $\varphi_F$ is convex, and $C_2(b^*) \leq 0$, we have that $V_{b^*}''(x) \leq 0$, for all $x \in (b^*,\infty)$. In other words, $V_{b^*}$ is concave.
\end{proof}


We are now ready to state the main result, which is a solution to the general control problem.
\begin{thm}\label{mainresult}
If $I_F'(0) - I_F(0)\varphi_F'(0) \leq 1$, then $\pi_0$ is an optimal strategy and the optimal value function is given by
\begin{equation*}
V(x) = I_F(x) - I_F(0) \varphi_F(x), \quad x \geq 0 .
\end{equation*}
If $I_F'(0) - I_F(0) \varphi_F'(0) > 1$, then $\pi_{b^*}$ is an optimal strategy, with $b^*$ a solution to \eqref{solution}, and the optimal value function is given by
\begin{equation*}
V(x) = \twopartdef{\frac{\psi(x)}{\psi'(b^*)}}{0 \leq x \leq b^*,}{I_F(x) + \frac{1 - I_F'(b^*)}{\varphi_F'(b^*)} \varphi_F(x)}{x \geq b^*.}
\end{equation*}
\end{thm}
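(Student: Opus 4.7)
The plan is to apply the Verification Lemma (Lemma~\ref{verificationlemma}) to each of the two cases, reducing the theorem to checking $C^2$-regularity, boundedness of the derivative, and the Hamilton-Jacobi-Bellman equation~\eqref{verificationsup} for the candidate value function. The key observation is that the sign of $1 - V'(x)$ determines the supremum in~\eqref{verificationsup}: when nonnegative the sup is attained at $u = F(x)$ and the equation reduces to $\Gamma_F(V) + F = 0$, whereas when nonpositive the sup is attained at $u = 0$ and the equation reduces to $\Gamma(V) = 0$.

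For the first case $I_F'(0) - I_F(0)\varphi_F'(0) \leq 1$, the discussion preceding Proposition~\ref{propsolution} shows that $V_0$ is concave with $V_0'(0) \leq 1$, so $V_0' \leq 1$ everywhere and the HJB reduces to $\Gamma_F(V_0) = -F$. This identity follows from linearity of $\Gamma_F$, from $\Gamma_F(I_F) = -F$ (Proposition~\ref{associate}), and from $\Gamma_F(\varphi_F) = 0$ (by definition of $\varphi_F$). The regularity $V_0 \in C^2(0, \infty)$ comes from that of $I_F$ and $\varphi_F$, and $V_0'$ is bounded because $I_F' \in [0, 1]$ and $\varphi_F'$ is monotone with finite limit at infinity. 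Lemma~\ref{verificationlemma} then gives the optimality of $\pi_0$ and the stated formula for $V$.

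For the second case $I_F'(0) - I_F(0)\varphi_F'(0) > 1$, Proposition~\ref{propsolution} supplies $b^\ast \in (0, \hat b]$ solving~\eqref{solution}, and the proposition immediately preceding the theorem gives $V_{b^\ast} \in C^2(0, \infty)$ and concave, with the smooth-fit condition $V_{b^\ast}'(b^\ast) = 1$. Concavity then yields $V_{b^\ast}' \geq 1$ on $(0, b^\ast)$ and $V_{b^\ast}' \leq 1$ on $(b^\ast, \infty)$. On $(0, b^\ast)$ the HJB reduces to $\Gamma(V_{b^\ast}) = 0$, which holds because $V_{b^\ast} = C_1(b^\ast)\psi$ there and $\Gamma \psi = 0$; on $(b^\ast, \infty)$ it reduces to $\Gamma_F(V_{b^\ast}) + F = 0$, which holds because $V_{b^\ast} = I_F + C_2(b^\ast)\varphi_F$ there and $\Gamma_F$ is linear. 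Boundedness of $V_{b^\ast}'$ is clear: on $[0, b^\ast]$ it equals $\psi'/\psi'(b^\ast)$, continuous on a compact set, and on $[b^\ast, \infty)$ it equals $I_F' + C_2(b^\ast)\varphi_F'$, bounded since $I_F' \in [0,1]$ and $\varphi_F'$ is monotone with limit $0$ at infinity. Lemma~\ref{verificationlemma} then delivers the optimality of $\pi_{b^\ast}$ and the stated formula for $V$.

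I do not anticipate a genuine obstacle: all the analytic heavy lifting (concavity and $C^2$-regularity of $I_F$ and of $V_{b^\ast}$, the existence of $b^\ast$, and the smooth-fit condition) has been done in the previous sections, so the proof of Theorem~\ref{mainresult} amounts to a careful identification of the supremum in~\eqref{verificationsup} on either side of $b^\ast$ and then an appeal to the Verification Lemma. The only subtle point is making sure that the sign of $1 - V_{b^\ast}'(x)$ is correctly deduced from the concavity of $V_{b^\ast}$ combined with $V_{b^\ast}'(b^\ast) = 1$, which is precisely the reason the smooth-fit equation~\eqref{solution} was chosen to define $b^\ast$.
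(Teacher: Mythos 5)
Your proposal is correct and follows essentially the same route as the paper: the paper's stated proof of the theorem is a one-liner precisely because the verification you carry out (concavity and $V_0'\leq 1$ in the first case; existence of $b^*$, $C^2$-regularity, concavity and smooth fit $V_{b^*}'(b^*)=1$ in the second, followed by the case-by-case reduction of the HJB equation and an appeal to Lemma~\ref{verificationlemma}) is exactly what is done in the surrounding text and in Propositions~\ref{propsolution} and the proposition preceding the theorem. Your write-up merely makes explicit the identification of the supremum on either side of $b^*$, which the paper leaves implicit.
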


\begin{proof}
All is left to justify is the expression for the optimal value function $V$ under the condition $I_F'(0) - I_F(0) \varphi_F'(0) > 1$. In that case, it suffices to use the general expression for $V_b$ obtained in Proposition~\ref{valuefunctionprop} together with the expressions for $C_1(b^*)$ and $C_2(b^*)$ given in Equations~\eqref{c1} and~\eqref{c2}.
\end{proof}

As announced, given an increasing and concave function $F$, and recalling that $\psi$ is independent of $F$ and always known explicitly (see Equation~\eqref{scale}), this solution to the control problem is explicit up to the computations of the functions $\varphi_F$ and $I_F$. It is interesting to note that these functions only depend on the dynamics of the nonlinear Ornstein-Uhlenbeck process $U$ as given in~\eqref{allin}; they are also solutions to ODEs.


\subsection{Solution to the problem with an affine bound}

If we choose $F(x) = R+Kx$, with $K > 0$ and $R \geq 0$, then $U$ is such that
\begin{equation*}
\mathrm{d}U_t = (\mu - R + KU_t) \mathrm{d}t + \sigma \mathrm{d}W_t ,
\end{equation*}
i.e., it is a \textit{standard} Ornstein-Uhlenbeck process. In this case, $\varphi_F$ is known explicitly (see \cite{borodin-salminen_2002}):
\begin{equation*}
\varphi_F(x) = \frac{H_K^{(q)}(x; \mu - R, \sigma)}{H_K^{(q)}(0; \mu - R, \sigma)}, \quad x \geq 0,
\end{equation*}
where, using the notation in \cite{renaud-simard_2021},
\begin{equation*}
H_K^{(q)}(x; m, \sigma) = \mathrm e^{K((x - m/K)^2/2\sigma^2}D_{-q/K}\left( \left(\frac{x - m/K}{\sigma}\right) \sqrt{2K}\right),
\end{equation*}
where $D_{- \lambda}$ is the parabolic cylinder function given by
\begin{equation*}
D_{- \lambda}(x) = \frac{1}{\Gamma(\lambda)} \mathrm e^{-x^2/4} \int_0^\infty t^{\lambda - 1} \mathrm e^{-xt-t^2/2} \mathrm{d}t, \quad x \in \R.
\end{equation*}

On the other hand, we can compute the expectation in the definition of $I_F$:
\begin{equation}\label{particularcase}
I_F(x) = \frac{K}{q+K}\left(x + \frac{\mu}{q}\right) + \frac{Rq}{q+K}.
\end{equation}

The following corollary is a generalization of both the results obtained in \cites{jeanblanc-shiryaev_1995,asmussen-taksar_1997} and in \cite{renaud-simard_2021} for a constant bound and a linear bound on control rates, respectively.
\begin{cor}\label{cor:renaud-simard}
Set $F(x)=R+Kx$,  with $K > 0$ and $R \geq 0$. Define $\Delta = -\frac{H_K^{(q)}(0; \mu-R, \sigma)}{H_K^{(q) \prime} (0; \mu-R, \sigma)}$. 

If $\Delta \geq \frac{K \mu}{q^2} + \frac{R}{q}$, then the mean-reverting strategy $\pi_0$ is an optimal strategy and the optimal value function is given, for $x \geq 0$, by
\begin{equation*}
V(x) = \frac{Kx}{q+K} - \left[ \frac{K}{q+K}\left(\frac{\mu}{q}\right) + \frac{Rq}{q+K} \right] \left(1- \frac{H_K^{(q)}(0; \mu - R, \sigma)}{H_K^{(q)}(0; \mu - R, \sigma)} \right) .
\end{equation*}

If $\Delta < \frac{K \mu}{q^2} + \frac{R}{q}$, then there exists a (unique) solution $b^* \in (0,\hat{b}]$ to
\begin{equation*}
\frac{\psi(b)}{\psi'(b)} - \left(b + \frac{\mu}{q}\right) - \frac{R}{K} = -\frac{q}{K}\left(\frac{\psi(b)}{\psi'(b)} - \frac{H_K^{(q)}(b; \mu - R, \sigma)}{H_K^{(q) \prime}(b; \mu - R, \sigma)}\right) ,
\end{equation*}
the mean-reverting strategy $\pi_{b^*}$ is an optimal strategy and the optimal value function is given by
\begin{equation*}
V(x) = \twopartdef{\frac{\psi(x)}{\psi'(b^*)}}{0 \leq x \leq b^*,}{\frac{K}{q+K}\left(x + \frac{\mu}{q}\right) + \frac{q}{q+K} \left(R+ \frac{H_K^{(q)}(x; \mu - R, \sigma)}{H_K^{(q)\prime}(b^*; \mu - R, \sigma)} \right)}{x \geq b^*.}
\end{equation*}
\end{cor}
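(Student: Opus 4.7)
The plan is to obtain the corollary by specializing Theorem~\ref{mainresult} to the affine case $F(x) = R + Kx$, using the closed-form expressions for $\varphi_F$ and $I_F$ displayed just above the statement.

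The first step is to translate the dichotomy of Theorem~\ref{mainresult} into the form appearing in the corollary. Using \eqref{particularcase}, we have $I_F'(x) \equiv K/(q+K)$ and $I_F(0) = K\mu/(q(q+K)) + Rq/(q+K)$. Since $\varphi_F(0) = 1$ and $\varphi_F(x) = H_K^{(q)}(x;\mu-R,\sigma)/H_K^{(q)}(0;\mu-R,\sigma)$, we read off $\varphi_F'(0) = -1/\Delta$. Substituting into $I_F'(0) - I_F(0)\varphi_F'(0) \leq 1$ and rearranging (using $\varphi_F'(0) < 0$) yields $I_F(0)/(1-I_F'(0)) \leq \Delta$, which simplifies to $\Delta \geq K\mu/q^2 + R/q$. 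Thus the two regimes of Theorem~\ref{mainresult} align with the two cases in the corollary.

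The second step is direct substitution in each case. In the first case, the formula $V(x) = I_F(x) - I_F(0)\varphi_F(x)$ provided by Theorem~\ref{mainresult}, together with the explicit expressions for $I_F$ and $\varphi_F$, produces the stated closed form. In the second case, Theorem~\ref{mainresult} gives $\pi_{b^*}$ optimal for some $b^*\in(0,\hat{b}]$ solving \eqref{solution}. Since $I_F'(b) \equiv K/(q+K)$, substitution into \eqref{solution}, multiplication by $(q+K)/K$, and use of $\varphi_F(b)/\varphi_F'(b) = H_K^{(q)}(b;\mu-R,\sigma)/H_K^{(q)\prime}(b;\mu-R,\sigma)$ converts \eqref{solution} into the equation stated in the corollary. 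The explicit formula for $V(x)$ on $\{x \geq b^*\}$ then follows from $1 - I_F'(b^*) = q/(q+K)$ and the substitution of $\varphi_F$, while the formula on $\{0 \leq x \leq b^*\}$ is already in the desired form.

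The third step is the uniqueness assertion on $b^*$. I would refine the proof of Proposition~\ref{propsolution}: because $I_F'' \equiv 0$ in this case, the concavity inequality for $I_F$ used there becomes an equality, which allows one to sharpen the comparison of $g(y) := I_F(y) - I_F'(y)\varphi_F(y)/\varphi_F'(y)$ and $h(y) := \psi(y)/\psi'(y) - \varphi_F(y)/\varphi_F'(y)$. Exploiting the strict concavity of $\psi$ on $(0, \hat{b})$, the strict convexity and strict monotonicity of $\varphi_F$, and the constancy of $I_F'$, the plan is to show that $g - h$ is strictly decreasing on $(0, \hat{b}]$ via the ODEs these functions satisfy, which yields uniqueness of $b^*$ on this interval. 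The main obstacle is precisely this monotonicity argument: whereas the passage from Theorem~\ref{mainresult} to the closed-form expressions of the corollary is essentially bookkeeping, extracting strict monotonicity of $g - h$ requires a careful manipulation of the differential equations for $\psi$ and $\varphi_F$ and may rely on specific properties of the parabolic cylinder function $D_{-q/K}$.
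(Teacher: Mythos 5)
Your first two steps are exactly the paper's (implicit) route: the corollary is obtained by specializing Theorem~\ref{mainresult} to $F(x)=R+Kx$ and substituting the closed forms of $\varphi_F$ and $I_F$, and your translation of the dichotomy $I_F'(0)-I_F(0)\varphi_F'(0)\lessgtr 1$ into $\Delta \gtrless K\mu/q^2+R/q$ and your rewriting of \eqref{solution} are the right bookkeeping. One arithmetic point you should resolve rather than gloss over: with $I_F(0)=\frac{K\mu}{q(q+K)}+\frac{Rq}{q+K}$ as printed in \eqref{particularcase}, the condition $I_F(0)(q+K)/q\leq\Delta$ gives $\Delta\geq K\mu/q^2+R$, \emph{not} $\Delta\geq K\mu/q^2+R/q$. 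Solving $\Gamma_F(f)=-F$ with an affine ansatz (or computing the expectation for the Ornstein--Uhlenbeck process directly) gives the constant term $\frac{R}{q+K}$, which does yield $R/q$ in the threshold and $R/K$ in the equation for $b^*$; so \eqref{particularcase} contains a typo that your write-up silently corrects in one place while citing the uncorrected formula in another. As written, your step~1 does not follow from the formula you invoke.

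The genuine gap is step~3. The only assertion in the corollary that is not an immediate consequence of Theorem~\ref{mainresult} is the \emph{uniqueness} of $b^*\in(0,\hat b]$; Proposition~\ref{propsolution} gives existence only, via the Intermediate Value Theorem applied to $g-h$. You correctly identify that something extra is needed, but what you offer is a plan ("the plan is to show that $g-h$ is strictly decreasing\dots the main obstacle is precisely this monotonicity argument"), not an argument: no monotonicity of $g-h$ is actually established, and you concede the key step may depend on unverified properties of $D_{-q/K}$. So the uniqueness claim remains unproven in your proposal. (For context, the paper itself supplies no proof of this corollary and in particular no uniqueness argument either; in the linear case $R=0$ uniqueness is imported from the earlier work of Renaud and Simard. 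A complete proof would either carry out the monotonicity analysis you sketch or give a reference that covers the affine case $R\geq 0$.)
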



\subsection{Solution to the problem with a capped linear bound}

If we choose $F(x) = \min(Kx, R)$, with $K, R > 0$, then $U$ is such that
\begin{equation*}
dU_t = (\mu - \min(KU_t, R))\mathrm{d}t + \sigma \mathrm{d}W_t,
\end{equation*} 
i.e., it is a Brownian motion with drift $\mu - R$ when $U_t > R/K$ and a standard Ornstein-Uhlenbeck process when $U_t < R/K$.

Note that $F$ is not differentiable at $x = R/K$, but there exists a decreasing sequence of differentiable and concave functions $F_n$ such that, for all $x \geq 0$, we have $F_n(x) \to F(x)$ when $n \to \infty$. By continuity, we have $\varphi_{F_n} \to \varphi_F$, and $I_{F_n} \to I_F$. Also, since $F$ is continuous, $I_F$ and $\varphi_F$ are (at least) twice continuously differentiable. 

We can compute $\varphi_F$ and $I_F$ explicitly, using the strong Markov property repeatedly:
\begin{equation*}
\varphi_F(x) = \twopartdef{B(x) + C(x) \varphi_F(R/K)}{0 \leq x \leq R/K,}{A(x) \varphi_F(R/K)}{x \geq R/K,}
\end{equation*}
and
\begin{equation*}
I_F(x) = \twopartdef{I_{K}(x) - D(x)\left(I_{K}(R/K) - I_F(R/K)\right)}{x \leq R/K,}{\frac{R}{q}\left(1 - A(x)\right) + A(x) I_F(R/K)}{x \geq R/K,}
\end{equation*}
where (see \cite{borodin-salminen_2002})
\begin{align*}
A(x) &= \esub{x}\left[\mathrm e^{-q \tau_{R/K}} \I{\tau_{R/K} < \infty}\right] \\
&= \tilde{\psi}(x - R/K) - \frac{2 q}{\sqrt{(\mu - R)^2 + 2\sigma^2 q} - (\mu - R)}\psi(x - R/K), \\
B(x) &= \esub{x}\left[\mathrm e^{-q \tau_0^F} \I{\tau_0^F < \tau_{R/K}^F}\right] = \frac{S\left(\frac{q}{K};\sqrt{\frac{2}{K}}\left(\frac{R-\mu}{\sigma}\right);\frac{x - \mu/K}{\sigma/\sqrt{2K}}\right)}{S\left(\frac{q}{K};\sqrt{\frac{2}{K}}\left(\frac{R-\mu}{\sigma}\right);\sqrt{\frac{2}{K}} \left(\frac{-\mu}{\sigma}\right)\right)}, \\
C(x) &= \esub{x}\left[\mathrm e^{-q \tau_{R/K}^F} \I{\tau_{R/K}^F < \tau_0^F}\right] = \frac{S\left(\frac{q}{K};\frac{x - \mu/K}{\sigma/\sqrt{2K}};\sqrt{\frac{2}{K}} \left(\frac{-\mu}{\sigma}\right)\right)}{S\left(\frac{q}{K};\sqrt{\frac{2}{K}}\left(\frac{R-\mu}{\sigma}\right);\sqrt{\frac{2}{K}} \left(\frac{-\mu}{\sigma}\right)\right)}, \\
D(x) &= \esub{x}\left[ \mathrm e^{-q \tau_{R/K}^F}\I{\tau_{R/K}^F < \infty}\right] = \frac{H_K^{(q)}(x; \mu, -\sigma)}{H_K^{(q)}(R/K; \mu, -\sigma)},
\end{align*}
where
\begin{equation*}
\tilde{\psi}(x) := 1 + \frac{2q}{\sigma^2}\int_0^x \psi(y) \mathrm{d}y
\end{equation*}
and
\begin{equation*}
S(\nu, x, y) := \frac{\Gamma(\nu)}{\pi} \mathrm e^{(x^2 + y^2)/4}\left(D_{-\nu}(-x) D_{-\nu}(y) - D_{-\nu}(x) D_{-\nu}(-y)\right),
\end{equation*}
and where
\begin{equation*}
I_K(x) := \frac{K}{q+K}\left(x + \frac{\mu}{q}\right).
\end{equation*}
Note that $I_K$ is the function $I_F$ when $F(x) = Kx$. See~\eqref{particularcase} when $R = 0$. 

Since $\varphi_F$ and $I_F$ are continuously differentiable, we can deduce that
\begin{equation*}
\varphi_F(R/K) = \frac{B'(R/K)}{A'(R/K) - C'(R/K)}
\end{equation*}
and
\begin{equation*}
I_F(R/K) = \frac{K/(q+K) - D'(R/K) I_{K}(R/K) + (R/q)A'(R/K)}{A'(R/K) - D'(R/K)}.
\end{equation*}


\begin{cor}
Set $F(x) = \min(Kx, R)$, with $K,R > 0$.

If $I_F'(0) - I_F(0) \varphi_F'(0) \leq 1$, then the generalized mean-reverting strategy $\pi_0$ is optimal and the optimal value function is given, for $x \geq 0$, by \begin{equation*}
V(x) = \twopartdef{I_{K}(x) - D(x)(I_{K}\left(\frac{R}{K}\right) - I_F\left(\frac{R}{K}\right)) - I_F(0)\left(B(x) + C(x) \varphi_F(\frac{R}{K})\right)}{x  \leq \frac{R}{K},}{\frac{R}{q} + A(x)\left(I_F\left(\frac{R}{K}\right) - I_F(0) \varphi_F\left(\frac{R}{K}\right) - \frac{R}{q}\right)}{x \geq \frac{R}{K}.}
\end{equation*} 

If $I_F'(0) - I_F(0) \varphi_F'(0) > 1$, then the generalized mean-reverting strategy $\pi_{b^*}$ is optimal, where $b^* \in (0, \hat{b}]$ is the unique solution to~\eqref{solution}, and the optimal value function is given by: if $b^* \leq \frac{R}{K}$, then
\begin{equation*}
V(x) = \threepartdef{\frac{\psi(x)}{\psi'(b^*)}}{x \leq b^*,}{I_{K}(x) - D(x)\left(I_{K}\left(\frac{R}{K}\right) - I_F\left(\frac{R}{K}\right)\right) - \frac{1 - I_F'(b^*)}{\varphi_F(b^*)}\left(B(x) + C(x) \varphi_F\left(\frac{R}{K}\right)\right)}{b^* \leq x \leq \frac{R}{K},}{\frac{R}{q} + A(x)\left(I_F\left(\frac{R}{K}\right) + \frac{1 - I_F'(b^*)}{\varphi_F'(b^*)}\varphi_F\left(\frac{R}{K}\right) - \frac{R}{q} \right)}{x \geq \frac{R}{K},}
\end{equation*}
and if $b^* > \frac{R}{K}$, then
\begin{equation*}
V(x) = \twopartdef{\frac{\psi(x)}{\psi'(b^*)}}{x \leq b^*,}{\frac{R}{q} + A(x)\left(I_F\left(\frac{R}{K}\right) + \frac{1 - I_F'(b^*)}{\varphi_F'(b^*)}\varphi_F\left(\frac{R}{K}\right) - \frac{R}{q} \right)}{x \geq b^*.}
\end{equation*}
\end{cor}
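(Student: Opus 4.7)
The plan is to apply Theorem~\ref{mainresult} with $F(x)=\min(Kx,R)$, using the explicit piecewise formulas for $\varphi_F$ and $I_F$ derived in the paragraphs immediately preceding the corollary statement. The $C^2$ regularity required by Theorem~\ref{mainresult} is not automatic at the kink $R/K$ of $F$, but it follows from the regularization argument sketched in the text: a decreasing sequence of smooth concave functions $F_n$ converging to $F$ yields $\varphi_{F_n}\to\varphi_F$ and $I_{F_n}\to I_F$, and the continuity of $F$ is enough to transfer the $C^2$ regularity to the limit.

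For the first case $I_F'(0) - I_F(0)\varphi_F'(0) \leq 1$, Theorem~\ref{mainresult} gives $V(x) = I_F(x) - I_F(0)\varphi_F(x)$. I would substitute the two-piece expressions for $I_F$ and $\varphi_F$ and collect terms on each of the intervals $[0,R/K]$ and $[R/K,\infty)$. On $[0,R/K]$ the first piece of the stated formula is immediate. On $[R/K,\infty)$, the identities $\varphi_F(x) = A(x)\varphi_F(R/K)$ and $I_F(x) = (R/q)(1 - A(x)) + A(x) I_F(R/K)$ allow one to factor out $A(x)$, yielding the second piece of the stated formula.

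For the second case $I_F'(0) - I_F(0)\varphi_F'(0) > 1$, Theorem~\ref{mainresult} gives $V(x) = \psi(x)/\psi'(b^*)$ for $x \leq b^*$ and $V(x) = I_F(x) + \frac{1 - I_F'(b^*)}{\varphi_F'(b^*)}\varphi_F(x)$ for $x \geq b^*$, where $b^*$ solves~\eqref{solution}. The sub-case split $b^* \leq R/K$ versus $b^* > R/K$ then arises because the formula for $V$ on $[b^*,\infty)$ must be further decomposed at the kink $R/K$: when $b^* \leq R/K$, three pieces appear, with $[b^*,R/K]$ using the sub-kink formulas and $[R/K,\infty)$ using the post-kink formulas (factoring out $A(x)$ as before); when $b^* > R/K$, only the post-kink formulas apply above $b^*$, and the expression collapses to two pieces.

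The main difficulty is purely algebraic bookkeeping: one must verify that the substitutions land the factors $A(x)$, $\varphi_F(R/K)$, $I_F(R/K)$, and the constants $B(x)$, $C(x)$, $D(x)$ in the correct positions of each piece. No new probabilistic or analytic input beyond Theorem~\ref{mainresult} and the explicit expressions for $\varphi_F$ and $I_F$ is required. The uniqueness of $b^*$ asserted in the second case is not addressed by the proof of Proposition~\ref{propsolution} and would require a separate argument, most naturally a monotonicity check for the function $g-h$ introduced in that proof on $(0,\hat b]$, based on the ODEs satisfied by $I_F$ and $\varphi_F$ together with the concavity of $I_F$ and convexity of $\varphi_F$.
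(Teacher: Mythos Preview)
Your proposal is correct and matches the paper's approach: the corollary is stated without proof in the paper, precisely because it follows by direct substitution of the piecewise formulas for $\varphi_F$ and $I_F$ into Theorem~\ref{mainresult}, together with the regularization argument already sketched in the text. Your observation that the uniqueness of $b^*$ is asserted but not established by Proposition~\ref{propsolution} is accurate; the paper does not supply a separate monotonicity argument for $g-h$, so this remains a genuine (if minor) gap in the paper itself rather than in your reasoning.
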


\section*{Appendix A. Proof of the Verification Lemma}

The second statement of the lemma is a direct consequence of the definition of the optimal value function. 

Now, let $\pi$ be an arbitrary admissible strategy. Applying Ito's Lemma to the continuous semi-martingale $(t, U_t^\pi)$, using the function $g(t,y) := \mathrm e^{-qt}V_{\pi^*}(y)$, we find
\begin{align*}
\mathrm e^{-q(t \wedge \tau_0^\pi)} & V_{\pi^*}(U_{t \wedge \tau_0^\pi}^\pi) \\
&= V_{\pi^*}(x) + \int_0^{t \wedge \tau_0^\pi} \mathrm e^{-qs} \left(\frac{\sigma^2}{2}V_{\pi^*}''(U_s^\pi) + \mu V_{\pi^*}'(U_s^\pi) - qV_{\pi^*}(U_s^\pi) - l_s^\pi V_{\pi^*}'(U_s^\pi) \right) \mathrm{d}s \\
& \qquad + \int_0^{t \wedge \tau_0^\pi} \sigma \mathrm e^{-qs} V_{\pi^*}'(U_s^\pi) \mathrm{d}W_s.
\end{align*}

Taking expectations on both sides, we find
\begin{align*}
V_{\pi^*}(x) &= \esub{x}\left[ \mathrm e^{-q(t \wedge \tau_0^\pi)} V_{\pi^*}(U_{t \wedge \tau_0^\pi}^\pi)\right] \nonumber \\
& \quad - \esub{x}\left[\int_0^{t \wedge \tau_0^\pi} \mathrm e^{-qs} \left(\frac{\sigma^2}{2}V_{\pi^*}''(U_s^\pi) + \mu V_{\pi^*}'(U_s^\pi) - qV_{\pi^*}(U_s^\pi) - l_s^\pi V_{\pi^*}'(U_s^\pi) \right) \mathrm{d}s\right] \nonumber \\
&\geq \esub{x}\left[ \mathrm e^{-q(t \wedge \tau_0^\pi)} V_{\pi^*}(U_{t \wedge \tau_0^\pi}^\pi)\right] + \esub{x} \left[\int_0^{t \wedge \tau_0^\pi} \mathrm e^{-qs} l_s^\pi \mathrm{d}s\right],
\end{align*}
where the inequality is obtained directly from the fact that $V_{\pi^*}$ satisfies~\eqref{verificationsup}. Note also that the expectation of the stochastic integral is $0$ because $V_{\pi^*}'$ is bounded.  Letting $t \to \infty$, we get that $V_{\pi^*}(x) \geq V_\pi(x)$, for all $x > 0$.
\section*{Acknowledgements}

Funding in support of this work was provided by a Discovery Grant from the Natural Sciences and Engineering Research Council of Canada (NSERC) and a PhD Scholarship from the Fonds de Recherche du Québec - Nature et Technologies  (FRQNT).

%
%
\bibliographystyle{abbrv}
\bibliography{references_de-finetti}
%
%
\end{document}